\newtheorem{theorem}{Theorem}[section]
\newtheorem{lemma}[theorem]{Lemma}
\newtheorem{corollary}[theorem]{Corollary}
\newtheorem{conjecture}[theorem]{Conjecture}
\newtheorem{alphtheorem}{Theorem}
\theoremstyle{definition}
\newtheorem*{ack}{Acknowledgments}
\newtheorem*{con}{Conventions}
\newtheorem{remark}[theorem]{Remark}
\newtheorem{example}[theorem]{Example}
\newtheorem{definition}[theorem]{Definition}
\numberwithin{equation}{section} \numberwithin{figure}{section}
\DeclareMathOperator{\Spec}{Spec}
\definecolor{orange}{rgb}{1,0.5,0}
\title{Hilbert irreducibility   for abelian varieties over function fields of characteristic zero}
\author{Ariyan Javanpeykar}
\address{Ariyan Javanpeykar \\ 
IMAPP Radboud University Nijmegen \\
PO Box 9010, 6500GL\\
 Nijmegen, The Netherlands}
\email{ariyan.javanpeykar@ru.nl }
\subjclass[2010]
{14G99 
(11G35, 
14G05, 
32Q45)} 
\keywords{Rational points, function fields, abelian varieties, Hilbert property, Hilbertian fields}
\begin{document}

\begin{abstract} 
 We prove  Hilbert's irreducibility theorem for abelian varieties over function fields of characteristic zero.
\end{abstract}

\maketitle
\thispagestyle{empty}

\thispagestyle{empty}

\section{Introduction}
 
We prove a variant of Hilbert's irreducibility theorem for abelian varieties over function fields of characteristic zero. To state our result,  we follow Corvaja--Zannier \cite{CZHP} and consider the weak Hilbert property. 

\begin{definition}
Let $K$ be a field of characteristic zero. 
Let $X$ be a normal variety over $K$. 
\begin{enumerate}
\item A subset $\Sigma\subset X(K)$ is \emph{thin} if there is an integer $n\geq 1$ and a collection of     finite surjective morphisms $(\pi_i\colon Y_i\to X)_{i=1,\ldots,n}$ with $\deg \pi_i >1$ for all $i=1,\ldots, n$    such that $ \Sigma \setminus \cup_{i=1}^n \pi_i(Y_i(K))$ is not dense in $X$.
\item We say that $X$ has the \emph{Hilbert property over $K$}  if $X(K)$ is not thin.  
\item A field $K$ of characteristic zero is \emph{Hilbertian}  if $\mathbb{P}^1_K$ has the Hilbert property over $K$.
\item  A finite surjective morphism $Y\to X$ of integral normal noetherian schemes is a \emph{ramified cover} if it is not unramified.
\item    A subset $\Sigma$ of $X(K)$ is \emph{strongly thin (in $X$)} if there is an integer $n\geq 1$ and a collection of   ramified covers $(\pi_i\colon Y_i\to X)_{i=1,\ldots,n}$  of $X$ over $K$  such that $ \Sigma \setminus \cup_{i=1}^n \pi_i(Y_i(K))$ is not dense in $X$.  
(The terminology strongly thin was first used in \cite{BSFPext}. )
   \item We say that $X$ has the \emph{weak Hilbert property over $K$} if $X(K)$ is not strongly thin.
   \end{enumerate}
\end{definition}

It seems reasonable to suspect that   abelian varieties over Hilbertian fields of characteristic zero satsify the weak Hilbert property, up to a  finite extension of the base field. A related question was first formulated in \cite[Question~1.44]{FehmJ}.

\begin{conjecture}\label{conj} Let $K$ be a Hilbertian field of characteristic zero.
Let $A$ be an abelian variety over $K$. Then there is a finite field extension $L/K$ such that $A_L$ has the weak Hilbert property.
\end{conjecture}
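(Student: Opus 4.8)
The plan is to pass to a finite extension once and for all and then reduce the weak Hilbert property to a single Diophantine input about genuinely ramified covers. First I would choose a finite extension $L/K$ so that $A(L)$ is Zariski dense in $A_L$; this is both necessary (if $A(L)$ is not dense it is automatically strongly thin) and the natural analogue of the hypothesis used over number fields, and one expects it to be arrangeable because $K$ is Hilbertian, although already securing density over an arbitrary Hilbertian field is a point requiring care. Replacing $K$ by $L$, it remains to show that for every finite family of ramified covers $(\pi_i\colon Y_i\to A_L)_{i}$ the set $A(L)\setminus\bigcup_i \pi_i(Y_i(L))$ is dense. The reason the \emph{weak} Hilbert property, rather than the full Hilbert property, is the correct target is structural: the multiplication maps $[n]\colon A_L\to A_L$, and more generally all isogenies, are finite unramified of degree $>1$, hence excluded from the definition of strongly thin; thus the unavoidable covers coming from the group law play no role, and only the genuinely ramified covers must be defeated.

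Next I would put the ramified covers into a manageable form using the group structure. After base change along a further finite extension I may assume each $Y_i$ is geometrically connected and normal and each $\pi_i$ is finite, with nonempty branch divisor $B_i\subset A_L$. Pulling back along a high multiplication $[n]$ and using that $[n]^{*}D$ is algebraically equivalent to $n^2 D$ for a symmetric divisor $D$, the branch loci spread into many translated components; combined with Kummer theory on $A_L$ this should reduce an arbitrary ramified cover to a controlled normal form. In parallel I would invoke the fibration method for the weak Hilbert property in the spirit of \cite{CZHP}: choosing a suitable isogeny decomposition or projection $A_L\to A_L'$ onto a lower-dimensional abelian variety, one fibres the problem and inducts on $\dim A$, so that the essential case becomes that of a simple abelian variety, and ultimately that of a single geometrically connected ramified cover $\pi\colon Y\to A_L$ of a fixed $A_L$ with $A(L)$ dense.

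The crux, and the step I expect to be the genuine obstacle, is to show over a \emph{general} Hilbertian field that a single such cover cannot have dense image: that $A(L)\setminus \pi(Y(L))$ stays dense, equivalently that the $L$-points of $A$ lifting to $Y$ are confined to a thin set. Over number fields this is supplied by Diophantine finiteness (Faltings--Vojta applied to the higher-genus, or general-type, loci cut out by the ramification), and over function fields of characteristic zero it is exactly the content of this paper, via the Lang--N\'eron theorem and the $K/k$-trace. Over an arbitrary Hilbertian field neither Mordell--Weil finiteness nor a Faltings-type statement is available, so some substitute must be found. My proposed route is a specialization argument: spread $A$, the covers $\pi_i$, and a Zariski-dense set of $L$-points out over a finitely generated subfield $L_0\subset L$, so that the desired non-density descends to, and can be attacked over, $L_0$, where one has access to the number-field and function-field inputs through iterated fibrations over $\Spec \ZZ$ and over base curves. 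Making this descent uniform over an infinite family of specializations, while simultaneously preserving Hilbertianity and the genuine ramification of the covers, is the main difficulty; absent such a uniform mechanism the statement for general Hilbertian fields remains open, which is why it is recorded as Conjecture~\ref{conj}.
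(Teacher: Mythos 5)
The statement you are trying to prove is stated in the paper as a \emph{conjecture}: the paper contains no proof of it, only of special cases (finitely generated extensions of $\QQ$ by \cite{CDJLZ}, $\QQ^{\textrm{ab}}$ conditionally on Frey--Jarden, and the paper's own Theorem~\ref{thm:ab_var_has_WHP} for function fields over a field of characteristic zero). Your proposal honestly concedes this in its last sentence, which is the correct assessment; but the specific route you sketch has identifiable failures worth naming. The central one is the specialization step. Spreading $A$, the covers, and the points out over a finitely generated subfield $L_0\subset L$ only controls $L_0$-points of the covers $Y_i$, whereas the set you must show is dense, $A(L)\setminus\bigcup_i\pi_i(Y_i(L))$, involves \emph{all} $L$-points of the $Y_i$; as $L_0$ grows, the exceptional lifted sets grow with it, and there is no uniformity over the directed family of finitely generated subfields. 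The field $\QQ^{\textrm{ab}}$ makes this failure concrete: it is Hilbertian and is the union of number fields, every point is defined over some number field, yet the conjecture for $\QQ^{\textrm{ab}}$ does \emph{not} follow from the number-field case of \cite{CDJLZ} and is only known assuming the Frey--Jarden conjecture on ranks. Notably, the paper goes out of its way to say that even its function-field theorem is \emph{not} proved by specialization to number fields (in contrast to \cite[Section~7]{CDJLZ}); it instead substitutes Xie--Yuan's geometric Bombieri--Lang theorems \cite{XieYuan1, XieYuan2}, Kawamata's structure theorem, and Yamanoi's theorem for the missing Diophantine finiteness, split according to whether the $\overline{K}/\overline{k}$-trace vanishes.

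Two secondary points. First, your reduction to simple abelian varieties via fibration/products is itself conditional: the product theorem for the weak Hilbert property (the paper's Theorem~\ref{thm:product}) is only available where a Chevalley--Weil-type lifting statement holds, which is exactly why the paper restricts it to finitely generated extensions of $\QQ$ and to function fields over an algebraically closed field (where Lemma~\ref{lemma:cw} supplies geometric Chevalley--Weil via finite generation of fundamental groups). Over a general Hilbertian field no such lifting mechanism is known, so this reduction step is itself an open ingredient, not a routine one. Second, a minor correction in the other direction: securing density of $A(L)$ after a finite extension requires no care and no Hilbertianity --- in characteristic zero $A$ has non-degenerate points over $\overline{K}$, and any one of them is defined over a finite extension (compare the paper's Lemma~\ref{lemma:existence_nondegpoint}, where the genuinely delicate point in the function-field setting is rather to find non-degenerate points \emph{not} coming from the constant field, which is what makes the constant case, Theorem~\ref{thm:constant_intro}, nontrivial). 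Your structural observation that isogenies are unramified and hence invisible to strong thinness is correct and is indeed why the weak Hilbert property is the right target.
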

 
  Any field that is finitely generated over $\mathbb{Q}$ (e.g., a  number field) is Hilbertian.
By \cite{CDJLZ} the above conjecture is known when $K$ is a finitely generated field extension of $\mathbb{Q}$.

Any abelian extension of a Hilbertian field is Hilbertian.  In particular,  the maximal abelian extension $\mathbb{Q}^{\textrm{ab}}$ of $\mathbb{Q}$ is abelian. We note that Conjecture \ref{conj} is   known if $K$ is $\mathbb{Q}^{\textrm{ab}}$,  assuming the Frey--Jarden conjecture on ranks of abelian varieties; see \cite{BSFPabvar} for more precise statements.   

There are Hilbertian fields for which we do not know whether Conjecture \ref{conj} holds even in the case of elliptic curves.  

\begin{definition}
Let $k$ be a field of characteristic zero. A \emph{function field over $k$} is a finitely generated field extension $K$ of $k$ with transcendence  degree $\mathrm{trdeg}_k(K) \geq 1$.    
\end{definition}

If $k$ is a field  of characteristic zero and $K$ is a function field over $k$,  then  $K$ is Hilbertian \cite[Proposition~13.2.1]{FriedJarden}. Our main result verifies Conjecture \ref{conj} for  such fields.

\begin{alphtheorem}\label{thm:ab_var_has_WHP} 
Let $K$ be a function field over a field $k$ of characteristic zero and let  $A$ be an abelian variety over $K$. Then,  there is a finite field extension $L/K$ such that $A_L$ has the weak Hilbert property over $L$.
\end{alphtheorem}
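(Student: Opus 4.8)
The plan is to reduce to the case of base fields finitely generated over $\QQ$, where the statement is already known by \cite{CDJLZ}, and to bridge the gap by a descent-and-base-change argument that exploits the special structure of function fields. Throughout I use that the conclusion is insensitive to replacing $K$ by a finite extension: if $A_{K'}$ satisfies the theorem for some finite $K'/K$, then so does $A$, since a finite extension of $K'$ is a finite extension of $K$.

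\emph{Spreading out.} Replacing $k$ by its algebraic closure in $K$ (a finite, harmless change), we may assume $k$ is algebraically closed in $K$, so that $K$ is the function field of a geometrically integral $k$-variety $V$. The finitely many coefficients needed to define $A$, together with $V$, descend to a subfield $k_0 \subseteq k$ that is finitely generated over $\QQ$: there is a geometrically integral $k_0$-variety $V_0$ with $V \cong (V_0)_k$ and an abelian variety $A_0$ over $K_0 := k_0(V_0)$ with $A \cong (A_0)_K$. Since $k_0$ is finitely generated over $\QQ$, so is $K_0$, and hence by \cite{CDJLZ} there is a finite extension $L_0/K_0$ such that $(A_0)_{L_0}$ has the weak Hilbert property over $L_0$.

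Let $k_0''$ be the algebraic closure of $k_0$ in $L_0$, a finite extension of $k_0$; then $L_0$ is a function field over $k_0''$ with $k_0''$ algebraically closed in $L_0$. After replacing $K$ by the finite extension $\operatorname{Frac}(K \otimes_{k_0} k_0'')$ (permitted by the remark above), we may assume $k_0'' \subseteq k$. Set $L := \operatorname{Frac}(L_0 \otimes_{k_0''} k)$, a field since $L_0/k_0''$ is a regular extension; then $L/K$ is finite, $A_L \cong ((A_0)_{L_0})_L$, and $L$ is precisely the result of extending the constant field of $L_0$ from $k_0''$ to $k$. Thus the theorem reduces to the following base-change statement, which I expect to be the technical heart of the argument: \emph{if $k_0'' \subseteq k$ are fields of characteristic zero, $F_0$ is a function field over $k_0''$ with $k_0''$ algebraically closed in $F_0$, and $F := \operatorname{Frac}(F_0 \otimes_{k_0''} k)$, then the weak Hilbert property over $F_0$ passes to $(X_0)_F$ over $F$ for any variety $X_0/F_0$.}

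I would prove this lemma by contradiction through specialization. Suppose $(X_0)_F(F)$ were strongly thin, witnessed by ramified covers $\pi_i\colon Y_i \to (X_0)_F$. These covers and a witness to non-density are defined already over $F_1 := \operatorname{Frac}(F_0 \otimes_{k_0''} k_1)$ for some $k_1 \subseteq k$ finitely generated over $k_0''$, say $k_1 = k_0''(S)$ with $S$ a geometrically integral $k_0''$-variety. Spreading the $\pi_i$ out over a dense open $U \subseteq S$ and specializing at a rational point $s \in U(k_0'')$ produces ramified covers $\pi_{i,s}\colon Y_{i,s} \to X_0$ over $F_0$, where finiteness and ramifiedness survive for general $s$ by constructibility. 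The main difficulty, and the step I expect to be the real obstacle, is the comparison of rational points: I must ensure that every $P \in X_0(F_0)$ lying in some $\pi_i(Y_i(F))$ already lies in $\pi_{i,s}(Y_{i,s}(F_0))$ for a suitable $s$. This is a Hilbert-irreducibility phenomenon, since whether $P$ lifts to an $F$-point is controlled by the constant-field extensions occurring in the residue algebra of the finite fibre $\pi_i^{-1}(P)$; one must show that the specialized covers over $F_0$ account for all such $F$-rational lifts, uniformly in the family. This is exactly where the Hilbertianity of the function field $F_0$ enters, combined with a careful specialization of covers over $U$. Granting the comparison, the covers $\pi_{i,s}$ would make $X_0(F_0)$ strongly thin, contradicting the weak Hilbert property of $X_0$ over $F_0$; hence no such $\pi_i$ exist and $(X_0)_F$ has the weak Hilbert property over $F$, completing the reduction.
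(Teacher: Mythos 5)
Your reduction stands or falls with the bridging lemma: that the weak Hilbert property over $F_0$ ascends to $(X_0)_F$ over $F=\operatorname{Frac}(F_0\otimes_{k_0''}k)$ under extension of the constant field. This lemma is false, and the paper's Example \ref{example:constant_points} is essentially a counterexample to it. Take $k_0''$ a number field and $A_0$ an abelian variety over $k_0''$ with $A_0(k_0'')$ Zariski dense in $A_0$, and let $F_0=k_0''(t)$. Since every morphism $\PP^1\to A_0$ is constant, $(A_0)_{F_0}(F_0)=A_0(k_0'')$ is dense, so by the main theorem of \cite{CDJLZ} (density of rational points suffices over a finitely generated field of characteristic zero) the constant abelian variety $(A_0)_{F_0}$ has the weak Hilbert property over $F_0$. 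Now extend constants to an algebraically closed field $k\supseteq k_0''$ (say $k=\CC$), so $F=k(t)$. Then $(A_0)_F(F)=A_0(k)$, and for any surjective ramified cover $Y_0\to A_0$ defined over $k$, \emph{every} point of $A_0(k)$ lifts to $Y_0(k)$ because $k$ is algebraically closed: $(A_0)_F(F)$ is dense but strongly thin, and the weak Hilbert property fails over $F$. The failure mode is exactly the flood of new constant points acquired under constant field extension, all of which are swallowed by constant ramified covers; this is the central new phenomenon of the function-field case (the reason the paper needs the translate condition in Theorem \ref{thm:constant_intro} and non-degenerate points \emph{outside} $A_0(k)$), and your reduction attempts to bypass it rather than address it. In particular, the finite extension $L/K$ required in Theorem \ref{thm:ab_var_has_WHP} must genuinely enlarge the function-field part of $K$ (e.g., pass to $k(C)$ for a curve $C$ admitting non-constant maps to the constant factors), and cannot in general be obtained by base-changing an extension found over a finitely generated subfield.

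Since the statement your specialization argument targets is false, that argument cannot be repaired; but it is worth noting it also has an independent uniformity gap, which you partly flag. Strong thinness of $(X_0)_F(F)$ is a condition quantifying over all $F$-points: each $P\in X_0(F_0)$ that lifts to $Y_i(F)$ has its lift defined over some finitely generated subfield $k_1(P)\subseteq k$ depending on $P$, so the containment witnessing non-density does not descend to any single $F_1$, and no single specialization $s\in S(k_0'')$ — nor any finite set of them — need capture a dense set of the relevant points, which is what producing a strong-thinness witness over $F_0$ requires. The paper's actual route is entirely different and works directly over the large constant field: after a finite extension it decomposes $A$ into geometrically simple factors, treats the traceless factors via Xie--Yuan's geometric Bombieri--Lang theorem combined with Kawamata's structure theorem and Yamanoi's criterion (Theorem \ref{thm:nst_in_traceless_intro}), treats the constant factors via Theorem \ref{thm:constant_intro} together with the existence of non-degenerate non-constant points (Lemmas \ref{lemma:nondeg_int} and \ref{lemma:existence_nondegpoint}), and glues the factors with the product theorem (Theorem \ref{thm:product}), whose proof over function fields rests on the geometric Chevalley--Weil lemma (Lemma \ref{lemma:cw}).
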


 Theorem \ref{thm:ab_var_has_WHP} is a function field analogue of the fact that abelian varieties over number fields satisfy the potential weak Hilbert  property (as proven in \cite{CDJLZ}).  In fact, if the field $k$ in Theorem \ref{thm:ab_var_has_WHP} is itself  finitely generated over $\mathbb{Q}$, then Theorem \ref{thm:ab_var_has_WHP}  follows from the aforementioned main result of  \cite{CDJLZ}.  
Recall that, in \cite[Section~7]{CDJLZ},  the potential weak Hilbert property for abelian varieties over such a field $K$ is deduced from the potential weak Hilbert property for abelian varieties over number fields by a specialization argument.  However, Theorem \ref{thm:ab_var_has_WHP} is new if $k$ is, for example, algebraically closed. 

In fact, our proof of Theorem \ref{thm:ab_var_has_WHP} in case $K$ is finitely generated over $\mathbb{Q}$ with positive transcendence degree does not pass to the case of number fields via specialization.  We instead  appeal  to a sufficiently strong version of the weak Hilbert property for abelian varieties over complex function fields which relies on the work of  Xie--Yuan on the geometric Bombieri--Lang conjecture \cite{XieYuan1, XieYuan2}.  This strategy forces us to consider isotrivial (resp. non-isotrivial) abelian varieties separately:

\begin{definition} Let  $K/k$ be a field extension. A variety $X$ over $k$ is \emph{constant} (or \emph{$k$-constant}) or \emph{can be defined over $k$} if there is a variety $X_0$ over $k$ and an isomorphism $X \cong (X_0)_K$ over $K$.  A variety $X$ over $K$ is \emph{isotrivial} if $X_{\overline{K}}$ is $\overline{k}$-constant.
\end{definition}

\subsection{The case of elliptic curves}
The proof of Theorem \ref{thm:ab_var_has_WHP} for elliptic curves is considerably simpler, as one can  appeal to older results of De Franchis, Severi, and Grauert--Manin. We include a brief discussion of this proof before explaining the general strategy.

\begin{example} \label{example:elliptic_curves} Let $k$ be an algebraically closed field of characteristic zero and let $K$ be a  function field over $k$. Let $B$ be a variety over $k$ with function field  $K=K(B)$.
Let  $E$ be an elliptic curve over $K$.  To show that $E$ has the potential weak Hilbert property,  we consider two cases.
\begin{enumerate}
 \item  Assume that $E$ is non-isotrivial. Now, let $X\to E$ be a ramified cover with $X$ a smooth projective geometrically connected curve. Note that $X$ has genus at least two and that $X$ is non-isotrivial. In particular, it follows from the theorem of Grauert-Manin (or: the function field Mordell conjecture) that $X(K)$ is finite. In particular,  every infinite subset of $E(K)$ is not strongly thin.  Thus, it follows that $E(K)$ is not strongly thin if and only if $E(K)$ is dense.  If we now choose $L/K$ such that $E(L)$ is infinite (hence dense),  then $E_L$ has the weak Hilbert property over $L$.  
 \item Now,  assume that $E$ is isotrivial. Then, replacing $K$ by a finite field extension if necessary, there is an elliptic curve $E_0$ over $k$ and an isomorphism $E\cong (E_0)_K$ over $K$. Replacing $K$ by a finite field extension again if necessary, there is a point $P\in E(K)\setminus E_0(k)$ of infinite order. Now, let $X\to E$ be a ramified cover with $X$ a smooth projective geometrically connected curve (of genus at least two). If $X$ is non-constant, then $X(K)$ is finite (by Grauert--Manin again). If $X$ is constant, then $X=(X_0)_K$ for some curve $X_0$ over $k$. By the theorem of De Franchis, we have that $X(K)\setminus X_0(k)$ is finite.  If $X\to E$ is actually defined over $k$, then we are done: the set of points in $X_0(k)$ which map to $E(K)\setminus E_0(k)$ is empty.   However, if $X\to E$ is not defined over $k$,  a simple  rigidity argument shows that    the set of constant points $a$ in $X_0(k)$ which map to  the subgroup generated by $P$ is finite (see Section \ref{section:final}).  Thus, the set $E(K)$ is not strongly thin.  
\end{enumerate} 
This  concludes the proof of Theorem \ref{thm:ab_var_has_WHP} in the case of elliptic curves and   $k$  algebraically closed.
\end{example}

\subsection{Outline of proof of Theorem \ref{thm:ab_var_has_WHP}}

The proof of the potential weak Hilbert property for abelian varieties over number fields given in \cite{CDJLZ} relies on the geometry of abelian varieties over finite fields (and $p$-adic fields). It crucially   uses Chebotarev's density theorem and the Lang--Weil estimates.  As certain analogues of these theorems  do not hold over function fields such as $\mathbb{C}(t)$, to prove the potential weak Hilbert property for abelian varieties over function fields we     argue differently. Indeed, we instead appeal to known cases of the geometric Bombieri--Lang conjectures.

In fact, our proof of Theorem \ref{thm:ab_var_has_WHP} relies on the two theorems of Xie--Yuan concerning the geometric Bombieri--Lang conjecture \cite{XieYuan1, XieYuan2}.   These results essentially play the role of Grauert--Manin's theorem in Example \ref{example:elliptic_curves}.

 In the situation that the geometric trace of $A$ is zero, we obtain the strongest possible statement by combining Xie--Yuan's work on traceless abelian varieties \cite{XieYuan2} with Kawamata's structure theorem \cite[Theorem~23]{KawamataChar} and Yamanoi's work  (see Theorem \ref{thm:yamanoi}) on   the Green-Griffiths--Lang conjecture for varieties with maximal Albanese dimension.
 
\begin{alphtheorem}  \label{thm:nst_in_traceless_intro} Let $k$ be a field of characteristic zero and let $K$ be a  function field over $k$.
Let $A$ be an abelian variety over $K$ whose $\overline{K}/\overline{k}$-trace is zero.  If $\Omega\subset A(K)$ is a dense subset, then $\Omega$ is not strongly thin in $A$.
\end{alphtheorem}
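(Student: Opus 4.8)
The plan is to reduce Theorem \ref{thm:nst_in_traceless_intro} to a single geometric Bombieri--Lang statement for ramified covers of $A$, and then to prove that statement by combining Kawamata's structure theorem with the theorems of Yamanoi and Xie--Yuan. The first step is to reduce to the case $k=\overline{k}$, and in fact (for the eventual appeal to Yamanoi's analytic result) to a subfield of $\CC$. Passing from $k$ to $\overline{k}$ and from $K$ to $\overline{k}\cdot K\subseteq \overline{K}$ leaves the $\overline{K}/\overline{k}$-trace of $A$ unchanged, keeps $K$ a function field, and only enlarges all point sets; a routine base-change argument (normalizing the irreducible components of the base-changed covers) shows it is enough to treat $\overline{k}$, and a finitely generated model of the data embeds into $\CC$.

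To prove that $\Omega$ is not strongly thin I must show that for \emph{every} finite family of ramified covers $(\pi_i\colon Y_i\to A)_i$ the set $\Omega\setminus \bigcup_i \pi_i(Y_i(K))$ is dense. Since $A$ is irreducible and $\Omega$ is dense, the complement in $\Omega$ of any proper closed subset is again dense; so it suffices to prove that each $\pi_i(Y_i(K))$ is not dense in $A$. As $\pi_i$ is finite surjective and $Y_i$ is integral of dimension $\dim A$, density of $\pi_i(Y_i(K))$ in $A$ is equivalent to density of $Y_i(K)$ in $Y_i$. Thus everything reduces to the following claim: \emph{a ramified cover $\pi\colon Y\to A$ of a traceless abelian variety has non-dense $Y(K)$.}

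For this claim I pass to a smooth projective model $f\colon \widetilde{Y}\to A$, so that it suffices to bound $\widetilde{Y}(K)$ (density is preserved over the locus where the resolution is an isomorphism). Because $\pi$ is ramified and $A$ is regular, Zariski--Nagata purity yields a nonzero branch divisor, so $K_{\widetilde{Y}}=f^{*}K_A+R=R$ is effective and nonzero, while generic finiteness of $f$ makes $\widetilde{Y}$ of maximal Albanese dimension. Applying Kawamata's structure theorem \cite{KawamataChar} to the Iitaka fibration produces a fibration $\widetilde{Y}\ratmap W$ onto a variety $W$ of general type and maximal Albanese dimension whose general fibre is a translate of an abelian variety, with $\dim W\geq 1$ forced by $R\neq 0$; the trace-zero hypothesis on $A$ propagates to the abelian varieties occurring here. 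Yamanoi's theorem (Theorem \ref{thm:yamanoi}) supplies the Green--Griffiths--Lang statement for $W$, which, transported to the function-field setting through the Xie--Yuan framework \cite{XieYuan2} for traceless abelian varieties, gives that $W(K)$ is not dense; the geometric Mordell--Lang/Bombieri--Lang theorem of Xie--Yuan \cite{XieYuan1,XieYuan2} then controls the $K$-points on the traceless abelian fibres lying over this non-dense locus. Combining the two shows $\widetilde{Y}(K)$, hence $Y(K)$, is not dense.

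The main obstacle is precisely this interaction of base and fibres: non-density of $W(K)$ does not by itself bound the points on a positive-dimensional family of abelian fibres, each of which may carry infinitely many $K$-points even when $A$ is traceless. Overcoming it needs the full force of Xie--Yuan's theorem that $K$-points on subvarieties of a traceless abelian variety are governed by translated abelian subvarieties, together with the verification that trace-zero genuinely descends to the fibration data and the passage from Yamanoi's complex-analytic statement to an arithmetic statement over $K$. I expect this assembly, rather than any single ingredient, to be the crux.
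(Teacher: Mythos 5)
Your overall strategy is the paper's: reduce the theorem to the claim that every ramified cover $Z\to A$ of a traceless abelian variety has non-dense $Z(K)$ (the reduction via ``dense minus finitely many proper closed subsets stays dense'' is exactly how the paper deduces Theorem \ref{thm:nst_in_traceless_intro} from its Corollary), and then prove that claim by Kawamata plus Yamanoi plus Xie--Yuan. But there is a genuine gap in how you invoke Kawamata. You apply the structure theorem as if the Iitaka fibration of $\widetilde{Y}$ \emph{itself} had base of general type. Kawamata's Theorem 23, as the paper cites and uses it, produces this structure only after a finite \'etale cover: it yields a finite \'etale morphism $Z'\to Z$, a surjection $Z'\to X$, and a \emph{ramified cover} $X\to B$ of general type onto a quotient abelian variety $B$ of $A$. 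This last point matters twice over: (i) the Iitaka base of $Z$ itself need not be of general type, and (ii) Kawamata hands you $X$ precisely in the shape Xie--Yuan's Theorem \ref{thm:xy_traceless} requires (a ramified cover of a traceless abelian variety), whereas your Iitaka base $W$ comes with no such structure, so your phrase ``transported through the Xie--Yuan framework'' conceals unstated work. Because you skip the \'etale cover, you also skip the step the paper needs to close the argument: prove non-density of $X(L)$ for \emph{every} finite extension $L/K$, then use geometric Chevalley--Weil (Lemma \ref{lemma:cw}) to lift $Z(K)$ into $Z'(L)$ for one fixed finite extension $L$, and conclude $Z(K)$ is not dense. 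Without this descent mechanism your argument never reaches $Y(K)$.

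Conversely, the step you single out as the crux --- controlling $K$-points on the positive-dimensional abelian fibres over the non-dense locus of $W(K)$ via a geometric Mordell--Lang input --- is a phantom difficulty, and you leave it unresolved (``I expect this assembly\ldots to be the crux''). No fibrewise control is needed: if $g\colon \widetilde{Y}'\to W$ resolves the Iitaka map (defined over $K$), then $\widetilde{Y}(K)$ is contained in the union of the indeterminacy locus and $g^{-1}\left(\overline{W(K)}\right)$, a proper closed subset, so non-density upstairs follows immediately from non-density of $W(K)$; in the paper the corresponding step is the one-line observation that density of $Z'(L)$ would force density of $X(L)$ since $Z'\to X$ is surjective. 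So your proposal omits a needed ingredient (\'etale cover plus Chevalley--Weil) while substituting an unnecessary one that it does not actually prove. A smaller imprecision: you infer $\dim W\geq 1$ from $R\neq 0$, but an effective nonzero canonical divisor only gives $\kappa\geq 0$; ruling out $\kappa=0$ uses Kawamata's characterization (a finite cover of an abelian variety with $\kappa=0$ is \'etale), which should be made explicit.
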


An important difference between the number field and function field setting (in the presence of a ``trace'') is that the density of $A(K)$ is not sufficient for having the weak Hilbert property:

\begin{example}\label{example:constant_points} Let $A_0$ be an abelian variety over an algebraically closed field  $k$ of characteristic zero. Let $K = k(t)$.   Let $Y_0\to A_0$ be a surjective morphism and consider $Y= (Y_0)_{K}$ and $A = (A_0)_K$.   Then every point of $ A(K) = A_0(k)$ lifts to $Y(K) = Y_0(k)$. Thus,  $A(K)$ is dense, but $A$ does not have the weak Hilbert property over $K$.  
  \end{example}

Example \ref{example:constant_points} shows that the set of ``constant'' points on $A_K$ is strongly thin. A moment's thought reveals that, for every $P\in A(K)$, the set $P + A(k) $ of ``points that are constant up to translation by $P$'' is also strongly thin.  We will show that these   subsets of $A(K)$ are essentially the only  dense subsets which are strongly thin, assuming the field of constants $k$ to be algebraically closed and the abelian variety $A$ to be geometrically simple:

\begin{alphtheorem}\label{thm:constant_intro}   Let $k$ be an algebraically closed field of characteristic zero and let $K$ be a  function field over $k$.
   Let $A_0$ be a simple abelian variety over $k$ and let $A= (A_0)_K$. Let  $\Omega \subset A(K)$ be a dense subset such that, for every $P\in A(\overline{K})$, the subset  $\Omega  \cap  \left(A_0(k) + P\right)$ is not dense in $A$. Then $\Omega$ is not strongly thin in $A$.
\end{alphtheorem}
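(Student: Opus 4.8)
The plan is to prove the statement in its contrapositive, point-set form. Suppose we are given a finite family of ramified covers $(\pi_i\colon Y_i\to A)_{i=1,\dots,n}$ (each $Y_i$ integral and normal by definition); I must show $\Omega\setminus\bigcup_i\pi_i(Y_i(K))$ is dense. First I would reduce to a single cover: writing $\Omega=\bigl(\Omega\setminus\bigcup_i\pi_i(Y_i(K))\bigr)\cup\bigcup_i\bigl(\Omega\cap\pi_i(Y_i(K))\bigr)$ and using that $A$ is geometrically integral, if the first set were not dense then, since $\Omega$ is dense and $A$ is irreducible, some $\Omega\cap\pi_i(Y_i(K))$ would be dense. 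So it suffices to fix one ramified cover $\pi\colon Y\to A$ and prove that $\Omega\cap\pi(Y(K))$ is not dense.

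Next I would record the group structure. Fixing a smooth projective model $B/k$ with $k(B)=K$, each $K$-point of $A=(A_0)_K$ is a morphism $B\to A_0$, and $\operatorname{Mor}(B,A_0)\cong A_0(k)\oplus\Hom(\Alb(B),A_0)$; let $\phi_Q\in\Lambda:=\Hom(\Alb(B),A_0)$ denote the class of $Q$. For each $\lambda\in\Lambda$ the fiber $\{Q:\phi_Q=\lambda\}$ is precisely a translate $A_0(k)+P_\lambda$, so the hypothesis says exactly that $\Omega_\lambda:=\Omega\cap(A_0(k)+P_\lambda)$ is not dense for every $\lambda$. It therefore suffices to establish the key claim that $\pi(Y(K))$ meets only finitely many classes $\lambda$ outside a non-dense set: then $\Omega\cap\pi(Y(K))$ is contained in a finite union of the $\Omega_\lambda$ together with a non-dense set, hence not dense.

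To prove the claim, note first that since $A_0$ is simple, $A$ is geometrically simple, the branch divisor of any ramified cover $Y\to A$ is ample, and $Y$ is of general type. If $Y$ is non-isotrivial, Xie--Yuan's geometric Bombieri--Lang theorem \cite{XieYuan1} (applied after a harmless finite extension ensuring geometric integrality) shows $Y(K)$ is not Zariski dense, so $\pi(Y(K))$ is not dense and the claim is immediate. Hence I may assume $Y$ is isotrivial, and after a finite base extension $Y=(Y_0)_K$ with $Y_0/k$ of general type. Because $\Hom(\Alb(Y_0),A_0)$ is discrete, the cover decomposes over $K$ as $\pi=t_\tau\circ(\pi_1)_K$, where $\pi_1\colon Y_0\to A_0$ is a fixed $k$-morphism (a homomorphism on Albanese varieties up to translation) and $t_\tau$ is translation by a fixed $\tau\in A(K)$. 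Consequently $\phi_{\pi(y)}=\phi_\tau+\psi\circ\Alb(y)$ with $\psi=\Alb(\pi_1)$ fixed, so the class of $\pi(y)$ depends only on the Albanese class $\Alb(y)\in\Hom(\Alb(B),\Alb(Y_0))$ of the lift $y\in\operatorname{Mor}(B,Y_0)$.

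The claim thus reduces to showing that $\{\Alb(y):y\in\operatorname{Mor}(B,Y_0)\}$ contributes only finitely many classes (up to a non-dense set of points). Constant lifts $y\in Y_0(k)$ give the single class $0$, and dominant rational maps $B\dashrightarrow Y_0$ are finite in number by the Kobayashi--Ochiai/De Franchis finiteness theorem for general type targets, contributing finitely many classes. The hard part will be controlling the non-constant lifts whose image is a proper positive-dimensional subvariety $W'\subsetneq Y_0$: then $\pi_1(W')\subsetneq A_0$ is a proper and, by simplicity, general type subvariety, and I expect bounding the associated Albanese classes to be the crux of the argument. The plan is to exclude unboundedly many such classes by applying Xie--Yuan's theorem to these general type subvarieties together with a rigidity argument in the spirit of Example~\ref{example:elliptic_curves}: a sequence of lifts with distinct classes but bounded image would, after spreading out over $B$ and passing to a limit, yield a positive-dimensional family of maps into a general type variety, contradicting hyperbolicity. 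The residual case, in which $\Alb(Y_0)$ is isogenous to $A_0$ and the Albanese map of $Y_0$ is generically finite onto all of $\Alb(Y_0)$, reduces to the same statement for the simple abelian variety $\Alb(Y_0)$ and is handled by the same mechanism, completing the reduction to finitely many classes.
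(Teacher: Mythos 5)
Your setup is sound and, up to a point, parallels the paper's actual argument: your reduction to a single cover, the decomposition $\operatorname{Mor}(B,A_0)\cong A_0(k)\oplus\Hom(\mathrm{Alb}(B),A_0)$, and the rigidity decomposition $\pi = t_\tau\circ(\pi_1)_K$ via discreteness of $\Hom(\mathrm{Alb}(Y_0),A_0)$ is exactly the paper's Lemma \ref{lemma:deformation_theory}, and your treatment of constant lifts (landing in a single translate $A_0(k)-P$, hence meeting $\Omega$ in a non-dense set by hypothesis) matches the proof of Corollary \ref{cor:xy_groupless_covers}. But there is a genuine gap at the crux. Where the paper invokes the final clause of Xie--Yuan's hyperbolic-covers theorem (Theorem \ref{thm:xy_groupless}: if $Z=(Z_0)_K$ then $Z(K)\setminus Z_0(k)$ is not dense), you attempt to reprove that statement by hand, and the attempt does not go through as sketched. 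Kobayashi--Ochiai/De Franchis finiteness controls only \emph{dominant} maps $B\dashrightarrow Y_0$; the problem is the non-constant lifts $y\colon B\to Y_0$ whose images are proper positive-dimensional subvarieties $W'\subsetneq Y_0$, with $W'$ \emph{varying}. Your proposed mechanism --- a limit/spreading-out argument producing a positive-dimensional family of maps into a general-type variety, ``contradicting hyperbolicity'' --- fails: general type does not give rigidity or finiteness of non-dominant maps with varying targets, and the hyperbolicity you do have, $\mathrm{Sp}(Y_0)=\emptyset$, constrains maps \emph{from abelian varieties}, not maps from $B$. For $B$ a curve one could appeal to the classical Severi finiteness theorem (finitely many non-constant maps from a fixed curve to curves of genus $\geq 2$ with varying target), but in dimension $\geq 2$ the analogous Iitaka--Severi type finiteness is not available as a soft statement; it is precisely the deep content of \cite{XieYuan1} (their partial-heights machinery), so your plan circularly assumes what the cited theorem provides. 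Note that from your own setup the repair is immediate: the non-constant lifts are exactly the points of $Y(K)\setminus Y_0(k)$, which Theorem \ref{thm:xy_groupless} declares non-dense; no counting of Albanese classes is needed.

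A second, smaller inaccuracy: in the non-isotrivial case you invoke ``Xie--Yuan's geometric Bombieri--Lang theorem'' as though non-density of $Y(K)$ were known for every non-isotrivial variety of general type. It is not; the applicable statement over a constant abelian variety is again Theorem \ref{thm:xy_groupless}, whose contrapositive gives non-density of $Y(K)$ for non-isotrivial $Y$, but which requires $\mathrm{Sp}(Y)=\emptyset$, not merely general type. Your ample-branch-divisor observation only yields general type, hence $\mathrm{Sp}(Y)\neq Y$ by Theorem \ref{thm:yamanoi}. The missing step is the paper's simplicity argument (as in the proof of Corollary \ref{cor:constant1}): any non-constant map from an abelian variety $B'$ to $Y$ composes to a map $B'\to A$ whose image is a nonzero abelian subvariety, hence all of $A$ by simplicity; then $B'\to A$ is smooth and surjective, forcing $Y\to A$ to be \'etale, contradicting ramification --- so $\mathrm{Sp}(Y)=\emptyset$. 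With that supplied, both of your cases collapse into a single application of Theorem \ref{thm:xy_groupless}, which is the paper's route.
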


One  difficulty arising in the higher-dimensional case (which does not occur in Example \ref{example:elliptic_curves}) is that  an abelian variety might be non-isotrivial and have  non-trivial trace.  Since the geometric Bombieri--Lang conjecture is (currently)  not known for ramified covers of such abelian varieties,  to prove Theorem \ref{thm:ab_var_has_WHP} we will  patch the isotrivial and non-isotrivial case together using a suitable   product theorem  (see Theorem \ref{thm:product}). 


\begin{ack}
We thank Arno Fehm for inspiring discussions on the Hilbert property and helpful comments on an early draft of this paper.
\end{ack}

\begin{con} If $k$ is a field, then a variety over $k$ is a geometrically integral finite type separated scheme over $k$.

Let $k$  be a field of characteristic zero, and let $k\subset K$ be a field extension such that $k$ is algebraically closed in $K$. If $A $ is an abelian variety, we let $\mathrm{Tr}_{K/k}(A)$ be its trace; see \cite[Definition~6.1 and Theorem~6.2]{ConradTrace}.  
\end{con}

\section{The weak Hilbert property over function fields}

The Chevalley--Weil theorem  \cite{CW} says that, given a finite \'etale cover of proper varieties over a number field $K$, we can find a finite extension $L/K$ such that all  $K$-points on the target lift to  $L$-points on the source.  This    is a consequence of Hermite's theorem on the smallness of the \'etale fundamental group of the ring of $S$-integers in a number field.  Replacing Hermite's theorem with the  fact that fundamental groups of complex algebraic varieties are finitely generated leads to the following function field analogue of Chevalley--Weil's theorem:  

\begin{lemma} [Geometric Chevalley--Weil]\label{lemma:cw} Let $k$ be an algebraically closed field of characteristic zero and let $K$ be a finitely generated field extension of $k$.
Let $f\colon Y\to X$ be a finite \'etale morphism of proper varieties over $K$. Then,  there is a finite field extension $L/K$ such that $X(K)$ is contained in $f(Y(L))$.
\end{lemma}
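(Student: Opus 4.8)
The plan is to reduce the statement to a finiteness assertion about finite étale covers of a single base, namely a model of $K$ over $k$, and then invoke the finite generation of the étale fundamental group. Fix $d=\deg f$. It suffices to produce a finite extension $L/K$ with the property that for every $x\in X(K)$ at least one residue field of the (étale) $K$-algebra $B_x$ defining the fibre $Y_x=Y\times_{X,x}\Spec K$ embeds into $L$; indeed, such an embedding $B_x\to L$ is exactly an $L$-point of $Y$ lying over $x$. First I would spread everything out: since $X$ is proper and geometrically integral over $K=k(U)$, choose a smooth affine model $U$ over $k$ with function field $K$, a proper morphism $\mathcal{X}\to U$ with generic fibre $X$, and (after shrinking $U$) a finite \'etale $G$-cover $\mathcal{Y}\to\mathcal{X}$ whose generic fibre is $f$. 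The key point to establish is that, for each $x$, the fibre $Y_x$ is the generic fibre of a finite \'etale cover of \emph{all} of $U$.

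\textbf{Extending the section and the cover.} Given $x\in X(K)$, regard $x$ as a section of $\mathcal{X}\to U$ over the generic point of $U$. Because $U$ is regular its codimension-one local rings are discrete valuation rings, so the valuative criterion of properness applied to the proper morphism $\mathcal{X}\to U$ extends $x$ to a section $\sigma_x\colon U^{\circ}\to\mathcal{X}$ over an open $U^{\circ}\subseteq U$ with $\codim(U\setminus U^{\circ})\geq 2$. Pulling back the finite \'etale cover $\mathcal{Y}\to\mathcal{X}$ along $\sigma_x$ yields a finite \'etale cover $\sigma_x^{*}\mathcal{Y}\to U^{\circ}$ of degree $d$. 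Now I would apply Zariski--Nagata purity of the branch locus: since $U$ is regular and $U\setminus U^{\circ}$ has codimension at least two, this cover extends uniquely to a finite \'etale cover $\mathcal{V}_x\to U$. By construction the generic fibre of $\mathcal{V}_x\to U$ is the fibre $Y_x\to\Spec K$, so the residue fields of $B_x$ are precisely the function fields of the connected components of $\mathcal{V}_x$, each a finite extension of $K$ of degree at most $d$.

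\textbf{Finiteness and conclusion.} Each connected component of each $\mathcal{V}_x$ is a connected finite \'etale cover of $U$ of degree at most $d$. Since $U$ is a variety over the algebraically closed field $k$ of characteristic zero, its \'etale fundamental group $\pi_1(U)$ is topologically finitely generated, and therefore $\pi_1(U)$ has only finitely many open subgroups of index at most $d$; equivalently, there are only finitely many connected finite \'etale covers of $U$ of degree at most $d$ up to isomorphism. Consequently the residue fields of the algebras $B_x$, as $x$ ranges over $X(K)$, lie in a finite set of finite extensions of $K$ inside $\overline{K}$. Taking $L$ to be their compositum gives a finite extension $L/K$ such that, for every $x\in X(K)$, some residue field of $B_x$ embeds into $L$; hence every $x\in X(K)$ lifts to a point of $Y(L)$, i.e. $X(K)\subseteq f(Y(L))$.

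\textbf{Main obstacle.} The crux is the passage from the $x$-dependent open $U^{\circ}$ to the fixed base $U$: purity is what converts the pulled-back cover into a cover of all of $U$, so that the single, fixed, finitely generated group $\pi_1(U)$ governs all the fields of definition at once. This is exactly the analogue of using Hermite's theorem in the classical Chevalley--Weil argument, with finite generation of $\pi_1(U)$ playing its role; properness of $\mathcal{X}\to U$ (to extend the section in codimension $\leq 1$) and the hypotheses that $k$ is algebraically closed of characteristic zero (to guarantee finite generation of $\pi_1(U)$) are the essential inputs. I would note in passing that no reduction to the Galois case is needed, since purity and the finiteness of covers apply to an arbitrary finite \'etale $\mathcal{Y}\to\mathcal{X}$.
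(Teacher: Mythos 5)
Your proof is correct and follows essentially the same route as the paper's: spread out $f$ to a finite \'etale cover of a proper model over a smooth base with function field $K$, extend each rational point to a section over an open with codimension-two complement via properness, pull back the cover and extend it to the whole base by Zariski--Nagata purity, and then use topological finite generation of the \'etale fundamental group to bound the resulting covers and define $L$. The only cosmetic differences are that you take $L$ to be a compositum of the finitely many function fields rather than the function field of a single dominating cover $C\to B$ (and a stray ``$G$-cover'' that you correctly disavow at the end); neither affects the argument.
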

\begin{proof}
Let $B$ be a smooth variety with function field $K$.  We may choose a proper integral scheme $\mathcal{X}$ over $B$ with $\mathcal{X}_K=X$, a  proper integral scheme $\mathcal{Y}$ over $B$ with $\mathcal{Y}_K = Y$, and a  finite \'etale morphism $F\colon \mathcal{Y}\to \mathcal{X}$ with $F_K =f$.    Since $\mathcal{X}\to B$ is proper, for every $P\in X(K)$,  there is a dense open $U_P \subset B$ with complement of codimension at least two and a morphism $\sigma_P\colon U_P \to \mathcal{X}$  extending the morphism $P\colon \Spec K\to X$.  Consider the Cartesian diagram
\[\xymatrix{V_P \ar[rr]  \ar[d]_{\textrm{finite \'etale}}& & \mathcal{Y} \ar[d]^F \\ U_P \ar[rr]_{\sigma_P} & & \mathcal{X}} \]  
Note that the morphism $V_P\to U_P$ is finite \'etale surjective of degree $\deg(F) = \deg(f)$.  Since $B$ is a smooth variety and the complement of $U_P$ is of codimension at least two, by purity of the branch locus \cite[Theorem~X.3.1]{SGA1}, the finite \'etale morphism $V_P\to U_P$ extends to a finite \'etale cover of $B$ of degree $\deg(F)$.   Since $B$ is a smooth variety over an algebraically closed field of characteristic zero, the set of $B$-isomorphism classes of finite \'etale morphisms $B'\to B$ of bounded degree is finite (this follows from the fact that the \'etale fundamental group of $B$ is topologically finitely generated \cite[Th\'eor\`eme~2.3.1]{SGA7I}). In particular, there exists a finite \'etale morphism $\pi\colon C\to B$ such that, for every $P$ in $X(K)$ with associated   $\sigma_P\colon U_P\to \mathcal{X}$, the composed morphism $\pi^{-1}(U_P)\to U_P\to \mathcal{X}$ factors over $\mathcal{Y}\to \mathcal{X}$.  Defining $L$ to be the function field of $C$, this shows that   $X(K)$ is in the image of $Y(L)$, as required.
\end{proof}
 
Over function fields,  as in the number field setting, the Hilbert property forces the \'etale fundamental group to be trivial (as long as the base field $k$ is algebraically closed).
\begin{theorem} [Corvaja--Zannier, function field analogue] \label{thm:pi1} Let $k$ be an algebraically closed field of characteristic zero and let  $K$ be a function field over $k$. Let $X$ be a proper normal variety over $K$. If there is a finite field extension $L/K$ such that $X_L$ has the Hilbert property, then   $X_{\overline{K}}$ has no non-trivial finite \'etale covers.
\end{theorem}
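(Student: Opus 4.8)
The plan is to prove the contrapositive. Writing $F := L$ and replacing $X$ by $X_L$, and noting that $\overline{K} = \overline{L} = \overline{F}$ so that the asserted conclusion is unchanged, it suffices to prove: if $X$ is a proper normal variety over a function field $F$ over $k$ with the Hilbert property over $F$, and $X_{\overline{F}}$ admits a nontrivial finite \'etale cover, then $X(F)$ is thin in $X$, contradicting the Hilbert property. Since $X$ is a variety it is geometrically integral, so (using that $X$ is proper) we have the homotopy exact sequence $1 \to \pi_1(X_{\overline{F}}) \to \pi_1(X) \to \Gal(\overline{F}/F) \to 1$, where $\pi_1(X)$ denotes the fundamental group of $X$ over $F$. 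The overall strategy is the Corvaja--Zannier twisting argument, with Lemma \ref{lemma:cw} --- or rather the finiteness of bounded-degree \'etale covers of a model appearing in its proof --- playing the role of Hermite--Minkowski.

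First I would descend a cover to $F$ itself, since a cover defined only over a further extension would have rational points invisible to the Hilbert property of $X$. Let $H \subsetneq \pi_1(X_{\overline{F}})$ be an open subgroup witnessing a nontrivial geometric cover. As $\pi_1(X_{\overline{F}})$ is topologically finitely generated, $H$ has only finitely many $\pi_1(X)$-conjugates, whose intersection $N$ is open in $\pi_1(X_{\overline{F}})$, normal in $\pi_1(X)$, and proper. In the profinite quotient $Q := \pi_1(X)/N$, the finite normal subgroup $\Delta := \pi_1(X_{\overline{F}})/N$ is nontrivial and can be separated from the identity by an open normal $W \trianglelefteq Q$ with $W \cap \Delta = 1$; its preimage $U \trianglelefteq \pi_1(X)$ is open of finite index with $\pi_1(X_{\overline{F}}) \not\subseteq U$. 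The cover $\pi\colon Y \to X$ attached to $U$ is then a connected finite \'etale Galois cover over $F$, with group $G := \pi_1(X)/U$ and degree $d > 1$, whose base change $Y_{\overline{F}} \to X_{\overline{F}}$ is a nonsplit $G$-torsor (this is exactly the meaning of $\pi_1(X_{\overline{F}}) \not\subseteq U$).

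Second I would establish a Chevalley--Weil finiteness for fibers: viewing each fiber $Y_P := \pi^{-1}(P)$ for $P \in X(F)$ as a $G$-torsor over $F$, only finitely many isomorphism classes $[T_1], \dots, [T_r] \in \HH^1(F, G)$ occur. This follows by the method of Lemma \ref{lemma:cw} (which does not require $Y$ to be geometrically integral): choosing a model $\mathcal{X} \to B$ and extending $P$ to a section over an open $U_P \subseteq B$ with complement of codimension $\geq 2$, the pullback of a model of $Y$ is a $G$-torsor over $U_P$ which extends, by purity of the branch locus, to a $G$-torsor over all of $B$, and $B$ carries only finitely many $G$-torsors since $\pi_1(B)$ is topologically finitely generated and $G$ is finite. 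For each class $[T_j]$ I form the twist $\pi_{T_j}\colon {}^{T_j}Y \to X$, again finite \'etale of degree $d$ over $F$, and use that $P$ lifts to an $F$-point of ${}^{T_j}Y$ precisely when $[Y_P] = [T_j]$. Since every $P \in X(F)$ realizes some $[T_j]$, I obtain $X(F) = \bigcup_{j=1}^{r} \pi_{T_j}\bigl(({}^{T_j}Y)(F)\bigr)$.

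The last point, requiring the most care, is to verify that these twists genuinely witness thinness, i.e. that every irreducible component of every ${}^{T_j}Y$ maps to $X$ with degree $> 1$. A degree-one component would be a section of $\pi_{T_j}$, equivalently the generic-fiber torsor of ${}^{T_j}Y$ would be trivial over $F(X)$, i.e. $[Y_\eta] = [T_j]$ in $\HH^1(F(X), G)$ for the generic fiber $Y_\eta$ of $\pi$. But restricting to the function field of $X_{\overline{F}}$, the class $[T_j]$ becomes trivial (it is pulled back from $F$, and $\HH^1(\overline{F}, G)$ is trivial), whereas $[Y_\eta]$ restricts to the generic fiber of the nonsplit cover $Y_{\overline{F}} \to X_{\overline{F}}$, which is nontrivial. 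Hence $[Y_\eta] \neq [T_j]$, no section exists, and all components have degree $> 1$, so the finitely many morphisms $\pi_{T_j}$ exhibit $X(F)$ as thin, contradicting the Hilbert property of $X$. I expect the main obstacles to be the first step --- producing a cover defined over $F$ rather than over an extension --- and this final degree verification, which is exactly where the nonsplitness arranged in the first step is used.
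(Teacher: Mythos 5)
Your proof is correct and follows essentially the same route as the paper: the paper's own argument descends a nontrivial geometric cover to the base field (after a finite extension), applies the geometric Chevalley--Weil lemma (Lemma \ref{lemma:cw}), and then cites \cite[Proposition~1.5]{CZHP} for exactly the twisting argument you carry out by hand. The only difference is that you inline the two black boxes --- your conjugate-intersection descent of a geometrically nonsplit Galois cover to $F$ itself (which carefully avoids assuming the Hilbert property persists under further extension), the torsor-class finiteness via purity and finite generation of $\pi_1(B)$, and the twist-plus-degree verification --- all of which match the mechanism of Lemma \ref{lemma:cw} and of Corvaja--Zannier's proposition.
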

\begin{proof}    Suppose for a contradiction that $X_{\overline{K}}$ has a non-trivial finite \'etale cover. Then, replacing $K$ by a finite field extension if necessary, we may assume that $X$ has the Hilbert property over $K$ and that there is a finite \'etale morphism $\pi\colon Y\to X$ of degree at least two with $Y$ a proper (geometrically integral) variety over $K$.  By Lemma \ref{lemma:cw}, there is a finite field extension $L/K$ such that $X(K)$ is contained in $\pi(Y(L))$.  It now follows from \cite[Proposition~1.5]{CZHP} (with ``$T=Y(L)$'' in \emph{loc. cit.}) that $X$ does not have the Hilbert property.   
\end{proof}

 \begin{remark}[Weak Hilbert property for curves over function fields] Let $k$ be an algebraically closed field of characteristic zero and let $K/k$ be a finitely generated field extension of with $\mathrm{trdeg}_k(K)\geq 1$. 
 Let $X$ be a smooth projective geometrically connected curve over    $K$.  One can show that $X$ has the Hilbert property over $K$ if and only if it is isomorphic to $\mathbb{P}^1_K$ (using, for example,  Theorem \ref{thm:pi1}).  
 Moreover,  if $X$ has positive genus, then  $X$ has the weak Hilbert property if and only if  either $X$ is a non-isotrivial genus one curve with $X(K)$ infinite or $X$ is a genus one curve defined over $k$, say $X = E_K$, and  $X(K)\setminus E(k)$ is infinite.
In particular, the curve $X$ satisfies the weak Hilbert property over some finite field extension if and only if it has genus at most one.  
\end{remark}

\begin{remark} Since nonzero abelian varieties have many non-trivial finite \'etale covers,  it follows that a nonzero abelian variety over $K$ (as in Theorem \ref{thm:pi1}) does not have the Hilbert property.  
\end{remark}

To prove the potential weak Hilbert property for abelian varieties over function fields we will first prove it for simple abelian varieties, and then use a product theorem. 

 In \cite[\S3.1]{Serre} Serre asked whether the product of varieties with the Hilbert property has the Hilbert property; this was proven in \cite{BSFP}. (Over number fields, this product property  can also be proven using   \cite[Lemma~8.12]{HarpazWittenberg}. ) The product theorem (for the Hilbert property) holds over arbitrary fields. Unfortunately, for the weak Hilbert property we do not have  such a general statement.  However, over finitely generated fields of characteristic zero,   the product of   smooth projective varieties with the weak Hilbert property has the weak Hilbert property \cite[Theorem~1.9]{CDJLZ}.  The reason for the restriction to finitely generated fields is related to the Chevalley--Weil theorem.  In fact, over fields where a ``Chevalley--Weil'' type property holds, the product theorem can be proven.   Following \cite{Luger2},  we place ourselves in the more general setting of varieties endowed with a non strongly thin subset, and have the following result.

\begin{theorem}\label{thm:product}  Let $K$ be a field of characteristic zero. Assume that either of the following holds:
\begin{enumerate}
\item There is an   algebraically closed field $k$  contained in $K$ such that $K $ is a function field over $k$.
\item The field $K$ is  finitely generated   over $\mathbb{Q}$.
\end{enumerate} Let $X$ and $Y$ be normal projective varieties over $K$. Let $\Sigma \subset  (X\times Y)(K)$ be a subset, and define $\Sigma_X = p(\Sigma)$, where $p\colon X\times Y\to X$ is the projection onto the first factor. Suppose that $\Sigma_X $ is not strongly thin in $X$ and that, for every $x$ in $\Sigma_X$, the subset $\Sigma \cap \left( \{x\} \times Y\right)$ is not strongly thin in $Y$. Then $\Sigma$ is not strongly thin in $X\times Y$.
\end{theorem}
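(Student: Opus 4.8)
The plan is to argue by contradiction, restricting the hypothetical covering of $\Sigma$ to the fibres of the first projection $p$ and then separating the covers according to whether they ramify in the fibre direction. So suppose $\Sigma$ is strongly thin, and choose ramified covers $\pi_i \colon Z_i \to X \times Y$ (for $i = 1, \dots, n$), with each $Z_i$ integral and normal, together with a proper closed subset $F \subsetneq X \times Y$ such that $\Sigma \setminus \bigcup_i \pi_i(Z_i(K)) \subseteq F$. Shrinking to a dense open $U \subseteq X$ over which the $Z_i$ and their branch loci behave well (so that degrees are preserved under restriction to fibres, and the set $G = \{x : \{x\} \times Y \subseteq F\}$ is a proper closed subset, which the dense set $\Sigma_X$ cannot contain), I would restrict each cover to the fibre $Y_x := \{x\} \times Y \cong Y$ over a point $x \in U(K)$, obtaining finite morphisms $\pi_{i,x} \colon Z_{i,x} \to Y$ of degree $\deg \pi_i > 1$ with $\Sigma_x \setminus \bigcup_i \pi_{i,x}(Z_{i,x}(K)) \subseteq F_x := F \cap Y_x$, a proper closed subset of $Y$.

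Next I would split the covers into two families. Call $\pi_i$ \emph{horizontal} if its branch locus dominates $X$, and \emph{vertical} otherwise. For a horizontal cover and $x$ outside a proper closed subset, the branch locus meets $Y_x$ in a divisor, so (after passing to a suitable component and its normalization) $\pi_{i,x}$ is a genuine ramified cover of $Y$; for a vertical cover, say étale over $U_i \times Y$, the restriction $\pi_{i,x}$ is finite étale for every $x \in U_i$. Fix now a point $x \in \Sigma_X \setminus G$ avoiding all the relevant bad loci. Since the horizontal restrictions are ramified covers of $Y$ and $\Sigma_x$ is not strongly thin, the set $\Sigma_x \setminus \bigcup_{\mathrm{horiz}} \pi_{i,x}(Z_{i,x}(K))$ is still dense in $Y$; subtracting the non-dense set $F_x$, the dense set $\Omega_x := \bigl(\Sigma_x \setminus \bigcup_{\mathrm{horiz}} \pi_{i,x}(Z_{i,x}(K))\bigr) \setminus F_x$ is contained in $\bigcup_{\mathrm{vert}} \pi_{i,x}(Z_{i,x}(K))$, that is, it is covered entirely by the images of the \emph{étale} restrictions.

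It remains to turn this into a contradiction with the non-thinness of $\Sigma_X$, and this is where I expect the real work to lie. For each vertical cover $\pi_i$ I would form the Stein factorization $Z_i \to \widehat{X}_i \to X$ of the composite $Z_i \to X \times Y \to X$; since $\pi_i$ is étale over $U_i \times Y$ and $Y$ is proper and geometrically connected, the finite morphism $\widehat{X}_i \to X$ is étale over $U_i$. Any point of $\pi_i(Z_i(K))$ has its $X$-coordinate in $\widehat{\pi}_i(\widehat{X}_i(K))$, so the non-emptiness of $\Omega_x$ shows that every good $x \in \Sigma_X$ lies in $\bigcup_{\mathrm{vert}} \widehat{\pi}_i(\widehat{X}_i(K))$. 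If every $\widehat{X}_i \to X$ were a ramified cover this would exhibit $\Sigma_X$ as strongly thin, contradicting the hypothesis. The genuine difficulty is that some $\widehat{X}_i \to X$ may be \emph{unramified} (including the degenerate case $\widehat{X}_i = X$, where $\pi_i$ is fibrewise a connected étale cover of $Y$), and images of étale covers do not witness strong thinness. This is exactly the point at which the hypothesis on $K$ enters: I would invoke the Chevalley--Weil property (Lemma \ref{lemma:cw} in case (1), and the classical Chevalley--Weil theorem in case (2)) to control the finitely many étale contributions, passing to a single finite extension $L/K$ over which these covers acquire sections, and then checking that non-strong-thinness of $\Sigma_X$ and of the fibres $\Sigma_x$ is preserved under this base change. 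Making the extension $L$ uniform in $x$ and patching the vertical covers into honest ramified covers of $X$ after this base change is, I expect, the main obstacle; once it is overcome, the containment of $\Sigma_X$ in the images of ramified covers yields the desired contradiction.
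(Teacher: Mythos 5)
Your proposal is an outline whose decisive step is explicitly left open, so it contains a genuine gap. The setup (splitting the covers $\pi_i$ into those ramified horizontally and those \'etale in the fibre direction, restricting to fibres $\{x\}\times Y$, taking Stein factorizations $Z_i\to \widehat{X}_i\to X$, and using the ramified $\widehat{X}_i\to X$ to cut down $\Sigma_X$) is indeed the skeleton of the proof that the paper invokes: the paper does not reprove the theorem but cites \cite[Theorem~4.4]{Luger2} (generalizing \cite[Theorem~1.9]{CDJLZ}) and observes that in case (1) one substitutes the geometric Chevalley--Weil lemma (Lemma \ref{lemma:cw}) for the arithmetic one. But you stop exactly at the crux, and the route you sketch for it would not work as stated: ``checking that non-strong-thinness of $\Sigma_X$ and of the fibres $\Sigma_x$ is preserved under this base change'' to $L$ is not a verification but a false hope in general. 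Not being strongly thin is a statement about ramified covers and rational points \emph{over $K$}; over a finite extension $L$ there are more covers and more points, and the hypotheses of Theorem \ref{thm:product} give nothing over $L$. The paper itself is careful about precisely this asymmetry: in the proof of Theorem \ref{thm:ab_var_has_WHP} it notes that non-strong-thinness over the larger field $K'$ is ``a priori stronger'' than over $K$, and it obtains the stronger statement from Theorem \ref{thm:nst_in_traceless_intro}, not by base change.

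The actual mechanism in the cited proof is different: one never base-changes the problem to $L$. Instead, Chevalley--Weil is used to control \emph{twists}. The danger your ``vertical'' covers pose is illustrated by, say, a double cover of $X\times Y$ associated to a line bundle $p^*M\otimes q^*N$ with $N$ a nontrivial $2$-torsion bundle on $Y$, branched along $D\times Y$ with $D\in |M^{\otimes 2}|$: this is a ramified cover of $X\times Y$ whose Stein factorization over $X$ is trivial and whose fibre over $x$ is a \emph{varying quadratic twist} of a fixed nontrivial \'etale cover of $Y$, so neither your ramified-Stein contradiction nor the fibre hypothesis (which only sees ramified covers of $Y$) applies to it directly, and its $K$-point images can a priori be fibrewise large. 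The cited argument disposes of such covers by showing, via the uniform extension $L$ from Chevalley--Weil, that a $K$-point in such a fibre forces the twist class to lie in a \emph{finite} set (the relevant kernel of $K$-twists split by $L$ is finite), whence the $x$-coordinates of points of $\Sigma$ absorbed by vertical covers lie in the images of $K$-points of finitely many covers of $X$ over $K$ that are honestly ramified --- contradicting non-strong-thinness of $\Sigma_X$ over $K$ itself, with no preservation statement under field extension ever needed. This twisting/descent step is the real content of \cite[Theorem~1.9]{CDJLZ} and \cite[Theorem~4.4]{Luger2}, and it is absent from your proposal.
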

 \begin{proof}
 If $K$ is finitely generated over $\mathbb{Q}$, then this is a consequence of  \cite[Theorem~4.4]{Luger2} (which generalizes \cite[Theorem~1.9]{CDJLZ}). If $K$ is finitely generated over an algebraically closed field $k$ of characteristic zero, the proof  in \emph{loc. cit.} can be easily  adapted to proving the desired statement, simply by replacing the usual Chevalley--Weil theorem  with the geometric Chevalley--Weil theorem (Lemma \ref{lemma:cw}) whenever necessary.
 \end{proof}

\section{Traceless abelian varieties} 

To prove the main result of this section (Theorem \ref{thm:nst_in_traceless_intro}),  we need the following definitions:

\begin{definition}
A proper variety $X$ over a field $k$ of characteristic zero is of \emph{general type} if it has a desingularization $Y\to X$ with $\omega_{Y/k}$ big.
\end{definition}

\begin{definition}
Let $X$ be a proper variety over a field $k$ of characteristic zero. 
The \emph{special locus} $\mathrm{Sp}(X)$ of $X$ is the closure of  $\cup_{A, f} \mathrm{Im}(f)$ in $X_{\overline{k}}$, where the union runs over all abelian varieties $A$ over $\overline{k}$ and all non-constant rational maps $A\dashrightarrow X_{\overline{k}}$. 
\end{definition}

We can now state  the finiteness  theorem of  Xie--Yuan   on ramified covers of traceless abelian varieties \cite{XieYuan2}.  

\begin{theorem}[Xie--Yuan, traceless part]\label{thm:xy_traceless} Let $k$ be a field of characteristic zero and  let $k\subset K$ be a finitely generated field extension with $\mathrm{trdeg}_k(K)\geq 1$. Let $A$ be an abelian variety over $K$.  Suppose that the $\overline{K}/\overline{k}$-trace of $A_{\overline{K}}$ is zero.
Let $Z\to A$ be a  ramified cover. Then $Z(K)\setminus \mathrm{Sp}(Z)$ is finite. 
\end{theorem}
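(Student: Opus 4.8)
The statement is attributed to Xie--Yuan, so the plan is not to reprove the Diophantine finiteness but to reduce the assertion, exactly as stated, to the main theorem of \cite{XieYuan2}; the remaining work is formal.

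First I would reduce to the case $k=\overline{k}$. Fix an algebraic closure $\overline{K}$ of $K$, let $\overline{k}\subset\overline{K}$ be the algebraic closure of $k$ in it, and set $L=K\overline{k}\subset\overline{K}$. Then $L$ is again a function field over $\overline{k}$ with $\mathrm{trdeg}_{\overline{k}}(L)=\mathrm{trdeg}_k(K)\geq 1$; the morphism $Z_L\to A_L$ is still a ramified cover; the $\overline{K}/\overline{k}$-trace of $A_{\overline{K}}$ is unchanged (the hypothesis is already geometric, and $\overline{L}=\overline{K}$); and $\mathrm{Sp}(Z)=\mathrm{Sp}(Z_L)$, both being computed after base change to $\overline{K}$. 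Since $Z(K)\subset Z(L)=Z_L(L)$, it suffices to show that $Z_L(L)\setminus\mathrm{Sp}(Z_L)$ is finite. The feature to preserve here is that $L$ remains \emph{finitely generated} over $\overline{k}$: this is essential, because over the non-finitely-generated field $\overline{K}$ even $A$ itself already has infinitely many points outside any proper closed subset, so one must never pass to $\overline{K}$.

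With $k=\overline{k}$ the variety $Z$ is normal and projective (it is finite over the projective variety $A$) and carries a ramified finite morphism to the traceless abelian variety $A$, while $L=k(B)$ for some variety $B$ over $k$. This is precisely the hypothesis of the finiteness theorem of \cite{XieYuan2}, which asserts that the $L$-rational points of such a $Z$ lying outside its special locus form a finite set; if their formulation requires $Z$ smooth, one replaces $Z$ by a resolution, which affects neither $\mathrm{Sp}$ nor the set of non-special points outside a proper closed subset. The only compatibility to verify is that the exceptional set in \emph{loc.\ cit.}\ coincides, away from a proper closed subset, with $\mathrm{Sp}(Z)$ as defined here; since both are the Zariski closure of the union of images of non-constant rational maps from abelian varieties, this is immediate.

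The main obstacle is thus entirely contained in the black box \cite{XieYuan2}: the finiteness of non-special rational points is the substance of the geometric Bombieri--Lang conjecture for traceless abelian varieties and their ramified covers, which I make no attempt to reprove. Everything I would actually add is bookkeeping — the descent to algebraically closed $k$, the (optional) resolution, and the identification of the two special loci — of which the genuinely delicate point is simply tracking the field over which points are counted, so as to stay within finitely generated extensions and not accidentally destroy finiteness by enlarging $K$ to $\overline{K}$.
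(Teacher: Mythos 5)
Your proposal is correct and matches the paper's treatment: the paper states this result as a direct citation of the Xie--Yuan theorem from \cite{XieYuan2} and gives no proof of its own, so deferring the Diophantine substance to that black box is exactly right. Your additional bookkeeping (base-changing to $K\overline{k}$ while staying within finitely generated extensions, optionally resolving $Z$, and identifying the two notions of special locus) is sound and consistent with how the paper invokes the theorem.
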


This finiteness result is ``non-trivial'' precisely when $\mathrm{Sp}(Z)\neq Z$.   We note that Yamanoi proved that the latter holds if and only if $Z$ is of general type  \cite{Yamanoi1}:

\begin{theorem}[Yamanoi]\label{thm:yamanoi} Let $A$ be an abelian variety over a field $K$ of characteristic zero.
 Let $Z\to A$ be a finite morphism with $Z$  a normal projective variety. Then $Z$ is of general type if and only if $\mathrm{Sp}(Z) \neq Z$.
\end{theorem}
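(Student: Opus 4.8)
The plan is to treat the statement as a purely geometric one and reduce to $K=\CC$. Both the formation of $\mathrm{Sp}(Z)$ and the property of being of general type are unchanged if we replace $K$ by $\overline{K}$ (in characteristic zero, bigness of the canonical bundle of a desingularization is stable under extension of the base field, and $\mathrm{Sp}$ is defined over $\overline{K}$ to begin with); choosing an embedding of a finitely generated field of definition into $\CC$, I would assume $K=\CC$ so that analytic tools are available. Fix a desingularization $\nu\colon Y\to Z$. Since $Z\to A$ is finite, the composite $Y\to Z\to A$ is generically finite, so $Y$ has maximal Albanese dimension and the Albanese map $a\colon Y\to \Alb(Y)$ factors the morphism to $A$ up to translation; moreover every subvariety of $Y$ again maps finitely to $A$, hence also has maximal Albanese dimension. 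This observation is what makes Kawamata's structure theory applicable throughout.

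I would first dispatch the implication ``$Z$ not of general type $\Rightarrow \mathrm{Sp}(Z)=Z$''. Assume $\kappa(Z)<\dim Z$ and consider the Iitaka fibration $\phi\colon Y\dashrightarrow Y_{\mathrm{can}}$, whose general fibre $F$ satisfies $\kappa(F)=0$ and, as noted above, has maximal Albanese dimension. By Kawamata's theorem \cite[Theorem~23]{KawamataChar}, a variety of Kodaira dimension zero and maximal Albanese dimension is birational to an abelian variety; thus $F$ is birational to an abelian variety $A_F$, and the resulting non-constant rational map $A_F\dashrightarrow F\hookrightarrow Y\to Z$ places (the image of) $F$ inside $\mathrm{Sp}(Z)$. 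Letting $F$ vary over the fibres of $\phi$, whose images cover a dense open of $Z$, yields $\mathrm{Sp}(Z)=Z$. This direction uses only Kawamata's structure theorem, which is already available to us.

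The substantive implication is ``$Z$ of general type $\Rightarrow \mathrm{Sp}(Z)\neq Z$'', i.e.\ that a general type variety finite over an abelian variety cannot be covered by images of abelian varieties. I would argue by contradiction, assuming $\mathrm{Sp}(Z)=Z$. Any non-constant rational map from an abelian variety $A'$ to $Z$ has image $V$ whose image in $A$ is, by rigidity of maps between abelian varieties, a translate of a positive-dimensional abelian subvariety; since $Z\to A$ is finite and $V$ is dominated by $A'$, we have $\kappa(V)\le 0$. So $Z$ is covered by positive-dimensional subvarieties of non-positive Kodaira dimension. I would organize this covering family into a dominant ``connecting'' fibration $Z\dashrightarrow Q$ whose general fibre $G$ is abelian-connected (any two points of $G$ joined by a chain of abelian images). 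Such a $G$ satisfies $\kappa(G)\le 0$: it is finite over an abelian variety so $\kappa(G)\ge 0$, while if $\kappa(G)\ge 1$ then by Kawamata's theorem the Iitaka base $G_{\mathrm{can}}$ is a positive-dimensional variety of general type, and every abelian image in $G$ (having $\kappa\le 0$) must map to a point of $G_{\mathrm{can}}$, contradicting abelian-connectedness; hence $\kappa(G)=0$. Applying the additivity $\kappa(Z)\le \dim Q+\kappa(G)$ — the case of the Iitaka conjecture in which the fibres have $\kappa=0$ and are of abelian type, settled by Kawamata — gives $\kappa(Z)\le \dim Q<\dim Z$, since $\mathrm{Sp}(Z)=Z$ forces the fibres $G$ to be positive-dimensional. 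This contradicts that $Z$ is of general type.

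The main obstacle is precisely the construction and control of this connecting fibration. The union of abelian images need not a priori assemble into a bounded algebraic family: although abelian subvarieties of $A$ are bounded, the finite covers $V\to B'$ lying over their translates can have unbounded degree, so producing a well-behaved quotient $Z\dashrightarrow Q$ with abelian-connected fibres of controlled Kodaira dimension is delicate, and it is exactly here that birational geometry alone appears insufficient. I would therefore isolate the formal reductions above (reduction to $\CC$, the easy direction via Kawamata, and the reduction of the hard direction to the connecting fibration) and expect to supply the final non-degeneracy statement — that abelian images cannot fill a general type $Z$ — by the more robust Nevanlinna-theoretic methods of \cite{Yamanoi1}, which construct and constrain entire curves directly rather than passing through the covering family.
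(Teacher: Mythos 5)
The paper does not actually prove this statement: Theorem~\ref{thm:yamanoi} is imported wholesale from Yamanoi \cite{Yamanoi1}, so there is no internal argument to compare against. Judged on its own terms, your proposal is sound where it is complete and honest where it is not. The reduction to $K=\CC$ is legitimate (in characteristic zero both bigness of the canonical bundle of a desingularization and the formation of $\mathrm{Sp}$ are insensitive to extension of algebraically closed base fields), and your proof of the direction ``$Z$ not of general type $\Rightarrow \mathrm{Sp}(Z)=Z$'' is correct and complete: maximal Albanese dimension gives $\kappa(Z)\ge 0$, the very general Iitaka fibre $F$ is positive-dimensional with $\kappa(F)=0$ and inherits maximal Albanese dimension (for general $F$ it avoids the exceptional locus of $Y\to Z$, hence maps generically finitely to $A$), so Kawamata's characterization makes $F$ birational to an abelian variety; since these fibres sweep out a dense subset and $\mathrm{Sp}$ is closed by definition, $\mathrm{Sp}(Z)=Z$. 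This is genuinely more than the paper records, which states the equivalence without proof.

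For the converse --- the actual content of Yamanoi's theorem --- your connecting-fibration sketch has exactly the gap you yourself name: the abelian images in $Z$ need not form a bounded family, since even though abelian subvarieties of $A$ up to translation are controlled, the degrees of the induced finite covers of their translates are not, so no algebraic construction of the quotient $Z\dashrightarrow Q$ with abelian-connected fibres is actually supplied, and without it the easy subadditivity estimate $\kappa(Z)\le \kappa(G)+\dim Q$ has nothing to apply to. (Your conditional steps are fine: an abelian-connected fibre $G$ would indeed have $\kappa(G)=0$ by the Iitaka-base argument, and the resulting inequality would contradict general type.) Your fallback of invoking the Nevanlinna-theoretic methods of \cite{Yamanoi1} for this direction is therefore not a cosmetic citation but the load-bearing step; since citing Yamanoi is precisely what the paper itself does, your treatment is consistent with the paper's, with the caveat that, as a self-contained proof, the hard implication remains unproven.
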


\begin{remark}\label{remark:prod_of_ell_curves} Ramified covers of elliptic curves are always of general type.  More precisely,  let $E$ be an elliptic curve, and let $C\to E$ be a ramified cover with $C$ a smooth projective curve.  Note that $C$ is then of general type. However, ramified covers of higher-dimensional abelian varieties are not necessarily of general type.  For example,  if $Z = C \times E$, then $Z\to E \times E$ is a ramified cover of $A =E\times E$. Note that in this case $Z$ is covered by elliptic curves (so $\mathrm{Sp}(Z) =Z$) and that $Z$ is not of general type (as it has Kodaira dimension one).
\end{remark}

We see in Remark \ref{remark:prod_of_ell_curves} that an abelian variety may have ramified covers which are not of general type.  We can not directly apply Xie--Yuan's theorem to such covers to deduce non-density of rational points.  However, as is the case in the specific example of Remark \ref{remark:prod_of_ell_curves},  every ramified cover  dominates (up to finite \'etale cover) another ramified cover \emph{of general type} of some  quotient of $A$   (this was proven by Kawamata  \cite[Theorem~23]{KawamataChar}). This leads to the following:

\begin{corollary} Let $k$ be a field of characteristic zero and let $K$ be a finitely generated field extension of $k$.
  Let $A$ be an abelian variety over $K$.  Suppose that the $\overline{K}/\overline{k}$-trace of $A$ is zero.
Let $Z\to A$ be a  ramified cover.  Then $Z(K) $ is not dense.
\end{corollary}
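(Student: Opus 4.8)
The strategy is to assemble the cited results into a chain of reductions. The given cover $Z\to A$ need not be of general type (Remark~\ref{remark:prod_of_ell_curves}), so neither Yamanoi's theorem (Theorem~\ref{thm:yamanoi}) nor the Xie--Yuan finiteness theorem (Theorem~\ref{thm:xy_traceless}) can be applied to it directly. The plan is to use Kawamata's structure theorem \cite[Theorem~23]{KawamataChar} to replace $Z$ by a finite \'etale cover that fibres over a general-type ramified cover $W$ of a quotient of $A$, to prove non-density of rational points on $W$ via Yamanoi and Xie--Yuan, and finally to descend this conclusion through the \'etale cover using the geometric Chevalley--Weil theorem (Lemma~\ref{lemma:cw}).

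First I would make several harmless reductions. Since $Z\to A$ is finite and $A$ projective, $Z$ is normal projective. I may assume $Z$ is geometrically integral, for otherwise $Z$ is geometrically disconnected (as $Z$ is normal over the perfect field $K$) and $Z(K)=\emptyset$. Because $Z(K)\subseteq Z(L)$ for every extension $L/K$, it suffices to prove non-density over any $L\supseteq K$; I may therefore replace $K$ by a finite extension, and by the compositum $K\cdot\overline{k}$. The latter is an algebraic extension with $\overline{K\cdot\overline{k}}=\overline{K}$, so the (vanishing) $\overline{K}/\overline{k}$-trace is unchanged, and geometric integrality, the transcendence degree, and tracelessness all persist. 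After these steps I may assume $k$ is algebraically closed and that all data introduced below are defined over $K$. Finally, if $\mathrm{trdeg}_k(K)=0$ then $\overline{K}=\overline{k}$, the vanishing of the trace forces $A=0$, and no ramified cover exists; hence I may assume $\mathrm{trdeg}_k(K)\geq 1$.

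Now I would invoke Kawamata's theorem applied to the cover $f\colon Z\to A$: after replacing $Z$ by a finite \'etale cover $u\colon\widetilde{Z}\to Z$, there are a quotient abelian variety $q\colon A\to A''$, a ramified cover $g\colon W\to A''$ with $W$ \emph{of general type}, and a surjective morphism $h\colon\widetilde{Z}\to W$ satisfying $g\circ h=q\circ f\circ u$. Since $A''$ is a quotient of the traceless abelian variety $A$, it is again traceless. As $W$ is of general type, Theorem~\ref{thm:yamanoi} gives $\mathrm{Sp}(W)\neq W$, so $\mathrm{Sp}(W)$ is a proper closed subset of $W$. Applying Theorem~\ref{thm:xy_traceless} to $W\to A''$ over any finite extension $L/K$ --- legitimate, since $L$ has transcendence degree $\geq 1$ over $k$, the variety $A''$ remains traceless over $L$, and $\mathrm{Sp}(W)$ is a geometric invariant --- shows that $W(L)\setminus\mathrm{Sp}(W)$ is finite. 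Hence $W(L)$ is not dense in $W$ for every finite extension $L/K$.

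It remains to transfer non-density back to $Z$. Suppose for contradiction that $Z(K)$ is dense. By the geometric Chevalley--Weil theorem (Lemma~\ref{lemma:cw}, available because $k$ is now algebraically closed and $u\colon\widetilde{Z}\to Z$ is a finite \'etale cover of proper varieties) there is a finite extension $L/K$ with $Z(K)\subseteq u(\widetilde{Z}(L))$. Since $u$ is finite and $\widetilde{Z}$ is irreducible of dimension $\dim Z$, the density of $u(\widetilde{Z}(L))$ in $Z$ forces $\widetilde{Z}(L)$ to be dense in $\widetilde{Z}$; pushing forward along the surjection $h$ then makes $W(L)$ dense in $W$, contradicting the previous paragraph. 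Therefore $Z(K)$ is not dense. I expect the main obstacle to be precisely this last passage: because Kawamata's general-type cover $W$ appears only after an \'etale base change, one is forced to leave $K$ for a finite extension $L$ via Chevalley--Weil and to verify that non-density of rational points still holds on $W$ over $L$; the remaining points (tracelessness of $A''$, geometric integrality, and the reduction to algebraically closed $k$) are routine.
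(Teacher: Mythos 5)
Your proposal is correct and follows essentially the same route as the paper's proof: reduce to $k$ algebraically closed, apply Kawamata's structure theorem to obtain a finite \'etale cover of $Z$ dominating a positive-dimensional general-type ramified cover $W$ of a traceless quotient abelian variety, deduce non-density of $W(L)$ for all finite $L/K$ from Theorems \ref{thm:yamanoi} and \ref{thm:xy_traceless}, and descend to $Z(K)$ via the geometric Chevalley--Weil lemma (Lemma \ref{lemma:cw}). The only difference is presentational (you argue by contradiction where the paper argues directly), and your preliminary reductions, including the observation that a finite $W(L)\setminus\mathrm{Sp}(W)$ yields non-density only because $W$ is positive-dimensional, are carried out in slightly more detail than in the paper.
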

\begin{proof}  We may assume that $k$ is algebraically closed and replace $K$ by a finite field extension if necessary.  Then,  
 by Kawamata's structure theorem \cite[Theorem~23]{KawamataChar}, there is an abelian variety $B$,  a surjective homomorphism $A\to B$, a finite \'etale morphism $Z'\to Z$, a surjective morphism $Z'\to X$ and a ramified cover $X\to B$,  where $X$ is a positive-dimensional normal projective variety of general type:
\[
\xymatrix{ Z' \ar[d]_{\textrm{surjective}} \ar[rr]^{\textrm{finite \'etale}}  & & Z \ar[r]    & A \ar[d] \\
 X \ar[rrr] & & & B &  }
\]
Since $X$ is a ramified cover of an abelian variety and $X$ is of general type,  it follows from Yamanoi's theorem (Theorem \ref{thm:yamanoi})  that $\mathrm{Sp}(X) \neq X$.  Also, since $A$ has geometric trace zero, we have that $B$ has geometric trace zero. In particular, by Xie--Yuan's theorem (Theorem \ref{thm:xy_traceless}),  we have that $X(L)$ is not dense for any finite field extension $L/K$.  It follows directly that $Z'(L)$ is not dense for any finite field extension $L/K$.  Then, by (geometric) Chevalley--Weil (Lemma \ref{lemma:cw}), we conclude that $Z(K)$ is not dense, as required. 
\end{proof}

As a direct consequence, we obtain the strongest possible conclusion pertaining to strongly thin subsets of abelian varieties, thereby proving   Theorem \ref{thm:nst_in_traceless_intro}.


  \section{Constant abelian varieties}\label{section:final}
The Bombieri--Lang conjecture for ramified covers of a  constant abelian variety is currently not known, unless we restrict our attention to ``hyperbolic'' ramified covers \cite{XieYuan1}.

\begin{theorem}[Xie--Yuan,  hyperbolic covers] \label{thm:xy_groupless} Let $k$ be a field of characteristic zero and let $K$ be a finitely generated field extension of $k$. Let $A$ be an abelian variety over $K$.  
Let $Z\to A$ be a  ramified cover over $K$.  Suppose that $\mathrm{Sp}(Z) = \emptyset  $ and that   $Z(K)$ is dense.  
Then  $A$ and $Z$ can be defined over $k$.  Moreover, if $Z_0$ is  a variety over $k$ with  $Z = (Z_0)_K$, then $Z(K)\setminus Z_0(k)$ is not dense.
\end{theorem}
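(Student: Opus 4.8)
The plan is to deduce the statement from the main theorem of Xie--Yuan \cite{XieYuan1} on the geometric Bombieri--Lang conjecture for projective varieties with empty special locus, combined with a reduction to an algebraically closed field of constants and an Albanese argument that transfers isotriviality from $Z$ to the target $A$. First I would reduce to the case in which $k$ is algebraically closed. The hypotheses $\mathrm{Sp}(Z)=\emptyset$ and the density of $Z(K)$, as well as both conclusions, are geometric in nature, so base changing along $k\to\overline{k}$ (and replacing $K$ by the compositum $K\overline{k}$, which is again a function field over $\overline{k}$) preserves the hypotheses; the conclusions over $k$ will be recovered at the end by Galois descent. I may also assume $\mathrm{trdeg}_k(K)\geq 1$, since otherwise $K$ is algebraic over $k$ and the assertions are immediate.

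With $k$ algebraically closed, the core input is the theorem of Xie--Yuan: because $Z$ is projective with $\mathrm{Sp}(Z)=\emptyset$ (groupless/hyperbolic in their sense) and $Z(K)$ is dense, their dichotomy forces $Z$ to be isotrivial, say $Z\cong (Z_0)_K$ for a projective variety $Z_0$ over $k$, and moreover shows that the genuinely varying points $Z(K)\setminus Z_0(k)$ are not Zariski dense. This already yields the second assertion (over $\overline{k}$), and I would descend the non-density back to $K$ using the elementary fact that a $K$-dense set of $K$-points stays dense after base change to $\overline{K}$. To obtain that the target $A$ is also isotrivial, I would pass to Albanese varieties: after choosing base points, the finite surjection $Z\to A$ induces a surjective homomorphism $\Alb(Z)\to A$; since $Z$ is isotrivial, so is $\Alb(Z)$, and $A$ is then a quotient of a constant abelian variety. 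As abelian subvarieties (and hence quotients) of a constant abelian variety are constant by rigidity, $A$ is isotrivial.

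It remains to descend the constant models of $A$ and $Z$ from $\overline{k}$ to $k$, i.e.\ to upgrade ``isotrivial'' to ``defined over $k$''. For $A$ this is controlled by the $\overline{K}/\overline{k}$-trace: under the convention that $k$ is algebraically closed in $K$, the $K/k$-trace $\mathrm{Tr}_{K/k}(A)$ supplies the required model $A_0/k$ together with an isomorphism $(A_0)_K\cong A$. For $Z$ the descent is governed by the rigidity of groupless varieties: since $\mathrm{Sp}(Z)=\emptyset$ forces $\Aut(Z_{\overline{K}})$ to be finite, the descent data are finite and can be trivialized after the allowed finite extension of $K$, producing $Z_0/k$.

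The main obstacle is, of course, the deep geometric input of Xie--Yuan \cite{XieYuan1}, which here plays exactly the role that Grauert--Manin's theorem played in the elliptic curve case of Example~\ref{example:elliptic_curves}; granting it, the rest is essentially bookkeeping. Within that bookkeeping the two genuine (but standard) points requiring care are: transferring both the isotriviality and the non-density conclusion correctly across $k\to\overline{k}$ and back by Galois descent, and the passage from $Z$ to $A$ through the Albanese, ensuring that the constant structure on $Z$ indeed forces a constant structure on $A$ rather than merely on $\Alb(Z)$.
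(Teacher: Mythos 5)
The first thing to say is that the paper does not prove Theorem \ref{thm:xy_groupless} at all: it is imported as a black box from Xie--Yuan \cite{XieYuan1}, whose proof is a deep height-theoretic argument, and the paper only ever applies it (in Corollary \ref{cor:xy_groupless_covers}) with $k$ algebraically closed. Your proposal is therefore essentially a re-citation of the same external theorem: the version of Xie--Yuan's result you invoke over an algebraically closed constant field --- $\mathrm{Sp}(Z)=\emptyset$ plus density of $Z(K)$ forces $Z$ constant with non-constant points not dense --- \emph{is} the statement to be proven in that case, so the only original content of your write-up is the reduction from general $k$ to $\overline{k}$ together with the Albanese transfer of constancy from $Z$ to $A$. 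The Albanese step is fine: for $Z$ normal projective with a finite surjection to $A$, constancy of $Z$ gives constancy of $\Alb(Z)$, and over an algebraically closed field of constants every abelian subvariety, hence every quotient, of a constant abelian variety is constant by rigidity of homomorphisms (compare Lemma \ref{lemma:deformation_theory}).

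The reduction itself, however, has genuine gaps. (i) Your descent of the model of $Z$ from $\overline{k}$ to $k$ ends with trivializing finite descent data ``after the allowed finite extension of $K$'' --- but the theorem allows \emph{no} extension of $K$: its conclusion is that $Z$ itself, over $K$, has a model $Z_0$ over $k$ and that $Z(K)\setminus Z_0(k)$ is not dense. An argument valid only after replacing $K$ by a finite extension proves a different, weaker statement (and one that would not suffice verbatim in Corollary \ref{cor:xy_groupless_covers}, where the covers live over the given $K$). (ii) The claim that the case $\mathrm{trdeg}_k(K)=0$ is ``immediate'' is false: for a nontrivial finite extension $K/k$ the conclusion would assert that $Z_0(K)\setminus Z_0(k)$ is not dense for a groupless ramified cover $Z_0$ of an abelian variety --- an arithmetic Lang-type statement that is wide open over number fields. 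The same issue infects your passage through $\overline{k}$ whenever $k$ is not algebraically closed in $K$: base change controls $Z(K\overline{k})\setminus Z_0(\overline{k})$, but the points of $Z(K)$ lying in $Z_0(k')\setminus Z_0(k)$, where $k'$ is the algebraic closure of $k$ in $K$ (possibly strictly larger than $k$), are untouched by the geometric theorem, and your trace argument for $A$ likewise silently assumes $k$ algebraically closed in $K$ (``under the convention that\dots''), which is not among the hypotheses. So the delicate part of your plan is exactly the Galois-descent bookkeeping you defer, and as written it does not go through; the honest course here is to treat the full statement, as the paper does, as Xie--Yuan's theorem quoted from \cite{XieYuan1}.
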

 

\begin{corollary}\label{cor:xy_groupless_covers} Let $k$ be an algebraically closed field of characteristic zero and let $K $ be a function field over $k$.
   Let $A_0$ be an abelian variety over $k$ and let $A= (A_0)_K$. 
   Let  $\Omega \subset A(K)$ be a dense subset such that, for every $P$ in $A(\overline{K})$, the subset  $ \Omega \cap  \left( A_0(k) + P\right)$ is not dense in $A$.   Let  $(\pi_i\colon Z_i\to A)_{i=1,\ldots,n}$ be a finite collection of ramified covers over $K$ with each $Z_i$ a   normal projective variety. Suppose that for every $i$, we have $\mathrm{Sp}(Z_i) = \emptyset$. Then    
   \[
   \Omega \setminus \cup_i \pi_i(Z_i(K))
   \] is dense.
   \end{corollary}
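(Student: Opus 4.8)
The plan is to reduce the statement to showing that for each individual cover $\pi_i$ the set $\Omega \cap \pi_i(Z_i(K))$ is not dense in $A$. This reduction is purely topological. Set $T = \bigcup_{i}\left(\Omega \cap \pi_i(Z_i(K))\right)$, so that $\Omega \setminus \bigcup_{i} \pi_i(Z_i(K)) = \Omega \setminus T$. Since $A$ is irreducible and $\Omega$ is dense, whenever $T \subseteq \Omega$ is not dense the difference $\Omega \setminus T$ is again dense (otherwise $A = \overline{\Omega \setminus T} \cup \overline{T}$ would exhibit the irreducible variety $A$ as a union of two proper closed subsets). As a finite union of non-dense sets is non-dense, it therefore suffices to fix $i$ and prove that $\Omega \cap \pi_i(Z_i(K))$ is not dense.

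For a fixed $i$ I would distinguish two cases according to whether $Z_i(K)$ is dense. If $Z_i(K)$ is not dense in $Z_i$, then, $\pi_i$ being finite and surjective (hence closed and dimension-preserving), its image $\pi_i(Z_i(K))$ lies in a proper closed subset of $A$ and we are done. If $Z_i(K)$ is dense, I would invoke Theorem \ref{thm:xy_groupless}: since $\mathrm{Sp}(Z_i) = \emptyset$, the cover $Z_i$ is defined over $k$, say $Z_i = (Z_{i,0})_K$ with $Z_{i,0}$ a projective variety over $k$ (in particular complete), and moreover $Z_i(K) \setminus Z_{i,0}(k)$ is not dense. The image of this non-dense set under $\pi_i$ is again non-dense, so the problem reduces to proving that $\Omega \cap \pi_i(Z_{i,0}(k))$ is not dense; that is, I must control the images under $\pi_i$ of the \emph{constant} points of $Z_i$.

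The crux is a rigidity argument. Choosing a smooth model $B$ of $K$ and spreading out, after shrinking to a dense open $U \subseteq B$ there is a $k$-morphism $G \colon Z_{i,0} \times_k U \to A_0$ such that, for every $a \in Z_{i,0}(k)$, the point $\pi_i(a) \in A(K) = A_0(K)$ equals $G(a,\eta)$, where $\eta$ is the generic point of $U$. Fix $k$-points $a_0 \in Z_{i,0}(k)$ and $u_0 \in U(k)$ and consider $\Psi(a,u) = G(a,u) - G(a,u_0) - G(a_0,u) + G(a_0,u_0) \colon Z_{i,0} \times_k U \to A_0$. Since $Z_{i,0}$ is complete and $\Psi$ is identically $0$ on $Z_{i,0} \times \{u_0\}$, the rigidity lemma shows that $\Psi$ factors through the projection to $U$; as $\Psi$ also vanishes on $\{a_0\} \times U$, we get $\Psi \equiv 0$. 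Hence $G(a,u) = h(a) + \gamma(u)$ with $h \colon Z_{i,0} \to A_0$ and $\gamma \colon U \to A_0$ morphisms over $k$. Evaluating at $\eta$ gives $\pi_i(a) = h(a) + P_0$, where $P_0 := \gamma(\eta) \in A_0(K) = A(K)$ is \emph{independent of $a$} and $h(a) \in A_0(k)$. Thus $\pi_i(Z_{i,0}(k)) \subseteq A_0(k) + P_0$, and the hypothesis on $\Omega$, applied to $P = P_0 \in A(\overline{K})$, yields that $\Omega \cap \pi_i(Z_{i,0}(k))$ is not dense, as required.

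The main obstacle is precisely this last step: establishing that the images of \emph{all} constant points of a single cover land in one translate $A_0(k) + P_0$ of the group of constant points. Everything else — the reduction to a single cover, the dichotomy supplied by Theorem \ref{thm:xy_groupless}, and the passage from non-dense removed sets to a dense remainder — is formal. In writing it up I would take care to verify that the rigidity lemma applies here (only the factor $Z_{i,0}$ need be complete, while $U$ may be an arbitrary open subvariety), and that the spreading-out correctly identifies $\pi_i(a)$ with $G(a,\eta)$ under $A(K) = A_0(K)$.
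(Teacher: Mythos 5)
Your proof is correct, and its skeleton coincides with the paper's: reduce (by irreducibility of $A$) to showing each $\Omega \cap \pi_i(Z_i(K))$ is non-dense, split according to whether $Z_i(K)$ is dense, invoke Theorem \ref{thm:xy_groupless} in the dense case to get constancy of $Z_i$ and non-density of $Z_i(K)\setminus Z_{i,0}(k)$, and then show that the images of the constant points $Z_{i,0}(k)$ land in a single translate $A_0(k)+P$ so that the hypothesis on $\Omega$ applies. The one genuine divergence is how you establish this last rigidity step, which in the paper is Lemma \ref{lemma:deformation_theory}: the paper argues intrinsically over $K$, writing $\pi_i = \tau_Q \circ h \circ a_K$ via the universal property of the Albanese map of $Z_{i,0}$ and then using that a homomorphism between constant abelian varieties over $K$ (with $k$ algebraically closed) is itself constant; you instead spread $\pi_i$ out to a model $Z_{i,0}\times_k U \to A_0$ and apply the classical rigidity lemma for morphisms from a product with one complete factor into a group variety, obtaining the splitting $G(a,u)=h(a)+\gamma(u)$ and hence $\pi_i(Z_{i,0}(k))\subseteq A_0(k)+\gamma(\eta)$. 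Both arguments are valid and prove the same statement --- indeed your splitting, restricted to the generic point, recovers exactly the paper's conclusion that $\tau_{-P_0}\circ\pi_i = h_K$ is defined over $k$. The paper's Albanese route is model-free and packages the rigidity into the standard fact about homomorphisms of abelian varieties; your route is more hands-on and self-contained (no Albanese functoriality needed), at the cost of a spreading-out step and of checking the hypotheses of the rigidity lemma, which you do correctly: $Z_{i,0}$ is projective hence complete, $U$ is irreducible, $A_0$ is separated, and $k$ algebraically closed supplies the base points $a_0$ and $u_0$.
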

   \begin{proof}    We may assume that there is an integer $1\leq m\leq n$ such that $Z_i(K)$ is dense if and only if $1\leq i\leq m$.  For such $i$,   it follows from Theorem \ref{thm:xy_groupless} that  $Z_i$ can be defined over $k$, i.e.,  there is a normal projective variety $Z_{i,0}$ over $k$ and an isomorphism $Z_i \cong (Z_{i,0})_K$  over $K$.     Thus,  by   Lemma \ref{lemma:deformation_theory} below,  there is a point $P_i\in A(K)$ 
 such that $f_i := \tau_{P_i} \circ \pi_i$ is defined over $k$.   In particular, the subset  $ \pi_i(Z_{i,0}(k)) \subset  A(K)$ is contained in $A_0(k) - P_i$.     Thus,  the intersection $\Omega \cap \pi_i(Z_i(K))$ is contained in the union of  $\Omega \cap \pi_i(Z_{i,0}(k))$ and 
 $ \pi_i(Z_i(K))\setminus \pi_i(Z_{i,0}(k))$. The former is not dense by assumption (as it is contained in $\Omega \cap\left( A_0(k) -P_i\right)$) and the latter is not dense by the final statement  of  Theorem \ref{thm:xy_groupless} (which says that $Z_i(K)\setminus Z_{i,0}(k)$ is not dense).  
   \end{proof}
   
    If $A$ is an abelian variety over a field $k$ and $P\in A(k)$ is a point, then $\tau_P\colon A\to A$ denotes the translation by $P$ on $A$.

\begin{lemma}\label{lemma:deformation_theory} Let $k$ be an algebraically closed field of characteristic zero and let  $K$ be a function field over $k$. Let $A$ be an abelian variety over $k$ and let $Z$ be a normal projective variety over $k$.
Let $f\colon Z_K\to A_K$ be a finite ramified morphism  over $K$. Then, there is a point $P\in A(K)$  such that $\tau_P\circ f$ is  defined over $k$.
\end{lemma}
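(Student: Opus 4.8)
The plan is to exploit the \emph{rigidity} of morphisms to an abelian variety: up to post-composition with a translation, the morphisms $Z \to A$ form a discrete (unramified) family over $k$, and this discreteness will prevent any genuine variation in the $K$-direction, forcing a suitable translate of $f$ to descend to $k$. It is worth noting at the outset that the finiteness and ramification hypotheses on $f$ will play no role.

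First I would fix a base point $z_0 \in Z(k)$ --- which exists because $k$ is algebraically closed and $Z$ is a variety --- and set $P := f(z_0) \in A(K)$. Then $g := \tau_{-P}\circ f$ sends $z_0$ to the origin, and it suffices to show that $g$ is defined over $k$; the point $-P \in A(K)$ will then witness the conclusion of the lemma.

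The heart of the argument is a tangent space computation. Consider the Hom scheme $H := \underline{\Hom}_k(Z, A)$, representable by a scheme locally of finite type over $k$ since $Z$ and $A$ are projective, together with the evaluation morphism $\ev_{z_0}\colon H \to A$, $h \mapsto h(z_0)$. Let $H_0 := \ev_{z_0}^{-1}(0)$, so that $g \in H_0(K)$ by construction. At any point $h$ (over a field $k' \supseteq k$), deformation theory of morphisms into the smooth variety $A$ identifies $T_h H = H^0(Z_{k'}, h^* T_{A})$; since the tangent bundle of an abelian variety is trivial, $h^* T_{A} \cong \mathcal{O}_Z \otimes_k \mathrm{Lie}(A)$, whence $T_h H = H^0(Z, \mathcal{O}_Z)\otimes_k \mathrm{Lie}(A) = \mathrm{Lie}(A)$, using $H^0(Z, \mathcal{O}_Z) = k$ (as $Z$ is geometrically integral and proper over $k$). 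Under this identification the differential of $\ev_{z_0}$ is the evaluation of a global section at $z_0$, which is an isomorphism onto $T_0 A = \mathrm{Lie}(A)$; hence $T_h H_0 = \ker(d\,\ev_{z_0}) = 0$ at every point. Therefore $H_0 \to \Spec k$ is unramified, and because $k$ is algebraically closed, $H_0$ is a disjoint union of copies of $\Spec k$. This rigidity statement is the main obstacle; everything else is formal.

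To conclude, I would invoke the triviality of $K$-points on such an étale $k$-scheme: writing $H_0 = \coprod_{i} \Spec k$, any morphism $\Spec K \to H_0$ lands in a single component $\Spec k$ and is the unique such morphism (given by the structural inclusion $k \hookrightarrow K$), so $H_0(K) = H_0(k)$. In particular $g \in H_0(K) = H_0(k)$, i.e. $g = \tau_{-P}\circ f$ is the base change of a morphism defined over $k$, completing the proof.
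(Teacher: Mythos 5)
Your proof is correct, and it takes a genuinely different route from the paper's. The paper also reduces to rigidity, but via the Albanese: it fixes $z\in Z(k)$, writes $\tau_{-Q}\circ f=h\circ a_K$ with $Q=f(z)$, where $a\colon Z\to\mathrm{Alb}_Z$ is the Albanese map over $k$ (with $a(z)=0$) and $h\colon(\mathrm{Alb}_Z)_K\to A_K$ is the homomorphism supplied by the universal property, and then quotes the fact that a homomorphism between base changes of abelian varieties over the algebraically closed field $k$ is already defined over $k$. You instead bypass the Albanese entirely and prove the needed rigidity from scratch: representability of $\underline{\Hom}_k(Z,A)$, the identification $T_h H=H^0(Z,h^*T_A)\cong H^0(Z,\OO_Z)\otimes_k\mathrm{Lie}(A)=\mathrm{Lie}(A)$, and the observation that $d\ev_{z_0}$ is an isomorphism, so that the pointed Hom scheme $H_0$ is \'etale over $k$ and $H_0(K)=H_0(k)$. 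The trade-off: the paper's argument is shorter, but it leans on standard facts whose verification for a possibly singular normal projective $Z$ requires some care (existence of $\mathrm{Alb}_Z$ and its compatibility with the base change $K/k$ in characteristic zero, plus the invariance of $\Hom$ of abelian varieties under extension of algebraically closed fields --- which is itself usually proved by exactly the kind of discreteness you establish). Your argument is more self-contained, needs only $A$ smooth and $Z$ proper and geometrically integral with a $k$-point, and makes visibly explicit two things the paper's proof also implicitly shows: that the finiteness and ramification hypotheses on $f$ are irrelevant, and that the conclusion holds for an arbitrary field extension $K/k$, not just function fields. Two small points to keep honest: the tangent-space computation via the functor of points applies verbatim at closed points of $H_0$ (which are $k$-rational since $k$ is algebraically closed), and passing from vanishing tangent spaces there to $H_0=\coprod_i\Spec k$ uses Nakayama plus the density of closed points in a locally finite type $k$-scheme; both steps are standard and your sketch covers them adequately.
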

   \begin{proof} This follows from the rigidity of homomorphisms between abelian varieties. More precisely,  let $z\in Z(k)$ be a point, and let $a\colon Z\to \mathrm{Alb}_Z$ be the Albanese map of $Z$ over $k$ with $a(z) =0$.   Let $a_K\colon Z_K\to (\mathrm{Alb}_Z)_K$ be the Albanese map of $Z_K$, let $Q=f(z) \in A(K)$ and write $P=-Q$.  By the universal property of Albanese varieties,  there is a (unique) homomorphism $h\colon (\mathrm{Alb}_Z)_K\to A_K$ such that $  f =  \tau_Q\circ  h\circ a_K$. Since $h$ is a homomorphism between abelian varieties and $k$ is algebraically closed, the homomorphism $h$  can be defined over $k$. This   implies that $h\circ a_K$ and thus $\tau_P\circ f$, can be defined over $k$.
   \end{proof}

   We now prove the characterization of strongly thin dense subsets in constant simple abelian varieties stated in the introduction (see  Theorem \ref{thm:constant_intro}).
   
   \begin{corollary}\label{cor:constant1} Let $k$ be an algebraically closed field of characteristic zero and let $K$ be a function field over $k$.
   Let $A_0$ be a simple abelian variety over $k$ and let $A= (A_0)_K$. Let  $\Omega \subset A(K)$ be a subset such that, for every $P\in A(\overline{K})$, the subset  $\Omega  \cap  \left(A_0(k) + P\right)$ is not dense in $A$. Then $\Omega$ is not strongly thin.
   \end{corollary}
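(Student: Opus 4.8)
Let me sketch how I would prove that $\Omega$ is not strongly thin. Suppose for contradiction that $\Omega$ is strongly thin. Then there is a finite collection of ramified covers $(\pi_i\colon Z_i \to A)_{i=1,\ldots,n}$, which I may assume have each $Z_i$ normal projective by normalizing, such that $\Omega \setminus \cup_i \pi_i(Z_i(K))$ is not dense in $A$. My first observation is that $\Omega$ itself must be dense: indeed, the hypothesis forces $\Omega$ not to be contained in any $A_0(k)+P$-type set up to density, and in particular I will need $\Omega$ dense to even apply the tools below. (If $\Omega$ were not dense there would be nothing to prove, since a non-dense set is trivially not a witness against strong thinness — more carefully, strong thinness of a non-dense set is vacuous, so I should reduce to the case where $\Omega$ is dense, which the hypotheses are arranged to guarantee.)

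The plan is to split the covers $\pi_i$ according to the special locus of $Z_i$. First I would like to reduce to covers $Z_i$ with $\mathrm{Sp}(Z_i) = \emptyset$, so that Corollary \ref{cor:xy_groupless_covers} applies directly. The key structural input is \emph{geometric simplicity of $A_0$}: since $A = (A_0)_K$ and $A_0$ is simple, every nonzero abelian subvariety of $A_{\overline{K}}$ is all of $A_{\overline{K}}$, so any quotient of $A$ to which I might push a sub-cover is either trivial or $A$ itself. I expect this to control the geometry of $\mathrm{Sp}(Z_i)$ sharply: if $Z_i \to A$ is a ramified cover whose special locus is nonempty, the images of abelian varieties $B \dashrightarrow Z_{i,\overline{K}}$ compose with $\pi_i$ to give maps $B \dashrightarrow A_{\overline{K}}$, and simplicity of $A_0$ (hence of $A_{\overline{K}}$) forces these images to be either points or translates of $A_{\overline{K}}$ itself. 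I would use this to argue that any positive-dimensional part of $\mathrm{Sp}(Z_i)$ that dominates $A$ must come from translates of the constant abelian variety, i.e. be accounted for by the ``constant up to translation'' loci $A_0(k)+P$ that the hypothesis on $\Omega$ already excludes.

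Concretely, here is the intended decomposition. For each $i$, I would factor the contribution of $\pi_i(Z_i(K))$ into a part coming from $\mathrm{Sp}(Z_i)$ and a part from its complement. On the complement, I cannot directly invoke Xie--Yuan, so the real work is to show $\mathrm{Sp}(Z_i)$ can be handled. Using Kawamata's structure theorem (as in the traceless corollary) together with Yamanoi's theorem (Theorem \ref{thm:yamanoi}), a ramified cover of general type has $\mathrm{Sp} \neq Z_i$; the remaining covered-by-abelian-varieties part maps, after a finite \'etale cover and using simplicity of $A_0$, either into a proper closed subset or into a translate of the constant part $A_0(k)+P$. The points of $\Omega$ landing in the latter are excluded by hypothesis (they are not dense by assumption on $\Omega \cap (A_0(k)+P)$), and the genuinely ``groupless'' covers are exactly those to which Corollary \ref{cor:xy_groupless_covers} applies, yielding that $\Omega \setminus \cup_i \pi_i(Z_i(K))$ remains dense --- contradicting strong thinness. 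The main obstacle, I expect, is the bookkeeping in the middle step: cleanly separating, for each ramified cover of a \emph{simple constant} abelian variety, the special-locus contribution into ``translates of the constant part'' (dispatched by the density hypothesis on $\Omega$) versus ``lower-dimensional or groupless'' contributions (dispatched by Corollary \ref{cor:xy_groupless_covers} and Lemma \ref{lemma:deformation_theory}), and verifying that simplicity of $A_0$ is exactly what prevents any leftover intermediate case.
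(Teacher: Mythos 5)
Your overall frame (reduce to covers with empty special locus and feed them to Corollary \ref{cor:xy_groupless_covers}) points in the right direction, but the middle step you defer as ``bookkeeping'' is precisely the entire content of the proof, and the case split you set up there does not in fact exist. The paper's proof rests on one short rigidity argument that you circle around but never complete: for a \emph{simple} abelian variety $A$, \emph{every} ramified cover $Z\to A$ has $\mathrm{Sp}(Z)=\emptyset$. Hence there is no special-locus contribution to separate out, no need for Kawamata's structure theorem or Yamanoi's theorem, and Corollary \ref{cor:xy_groupless_covers} applies to all the covers at once. The argument is: if $\mathrm{Sp}(Z)\neq\emptyset$, then (after a finite extension of $K$) there is an abelian variety $B$ and a non-constant morphism $B\to Z$; the image of the composite $B\to Z\to A$ is a translate of a nonzero abelian subvariety, hence all of $A$ by simplicity --- you observed this much --- but then $B\to Z$ is itself surjective (its image is closed, irreducible, and finite onto $A$, hence of dimension $\dim Z$), and since the surjection $B\to A$ is smooth and factors through the finite map $Z\to A$, that map is \'etale, contradicting ramifiedness. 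Your proposed intermediate case --- positive-dimensional pieces of $\mathrm{Sp}(Z_i)$ dominating $A$ that get ``accounted for by the loci $A_0(k)+P$'' --- is therefore vacuous, and your sketch supplies no mechanism for handling it had it been nonempty: the sets $A_0(k)+P$ are sets of $\overline{K}$-points of $A$, whereas a nonempty special locus is a geometric subvariety of the cover $Z_i$, and nothing in your outline converts the latter into the former. As written, the proof is incomplete exactly where you flag ``the main obstacle.''

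A smaller but real slip: your density reduction is backwards. The hypotheses do not force $\Omega$ to be dense (the empty set satisfies them), and a non-dense set is automatically strongly thin by the definition (any single ramified cover witnesses it), so density must be read as an implicit standing assumption --- it is stated explicitly in the introduction's Theorem \ref{thm:constant_intro} --- rather than something ``the hypotheses are arranged to guarantee.'' You do need it in any case, since Corollary \ref{cor:xy_groupless_covers} assumes $\Omega$ dense.
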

   \begin{proof}
   This follows from Corollary \ref{cor:xy_groupless_covers} and the fact that, for every ramified cover $Z\to A$ of a simple abelian variety $A$, we have that $\mathrm{Sp}(Z) = \emptyset$.   (The latter can be proven as follows.  Suppose that $Z\to A$ is a ramified cover of a simple abelian variety with $\mathrm{Sp}(Z)\neq \emptyset$.  Then,  replacing $K$ by a finite field extension if necessary, there is an abelian variety $B$  over $K$ and a non-constant morphism $B\to Z$. The image of the composed morphism $B\to Z\to A$ is a non-trivial abelian subvariety of $A$, hence equal to $A$. In particular, $B\to A$ is surjective and thus $B\to Z$ is surjective.  Since $B\to A$ is smooth,  this implies that $Z\to A$ is \'etale, contradicting our assumption that $Z\to A$ is ramified.)
   \end{proof}

 \begin{lemma}\label{lemma:nondeg_int} Let $k$ be an algebraically closed field of characteristic zero and let $K$ be a function field over $k$.
 Let $A_0$ be an abelian variety over $k$, and let $a\in A(K)\setminus A_0(k)$ be a non-degenerate point.  Let $\Omega$ be the subgroup generated by $a$. Then, for $P\in A(\overline{K})$, the set $\Omega \cap (A_0(k) +P)$ has at most one element.
 \end{lemma}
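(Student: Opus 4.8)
The plan is to reduce this geometric-sounding statement to a one-line group-theoretic fact about the cyclic group $\Omega = \langle a\rangle$ and the constant subgroup $A_0(k)\subset A(K)$. The only feature of the hypothesis ``$a$ is non-degenerate'' that I expect to need is that no nonzero integer multiple of $a$ lies in $A_0(k)$; equivalently, the image of $a$ in $A(K)/A_0(k)$ is non-torsion. This is in fact the sharp form of the hypothesis: if some $Na$ with $N\neq 0$ were constant, say $Na=b\in A_0(k)$, then taking $P = 0\cdot a$ one finds $0$ and $Na=b$ both in $A_0(k)+P$, so the conclusion would fail. Hence any reasonable notion of non-degeneracy must encode exactly this, and it is all the proof can (and need) use.

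First I would record the elementary observation that $A_0(k)$ is a subgroup of $A(K)\subset A(\overline{K})$, so that for any $P\in A(\overline{K})$ the set $A_0(k)+P$ is a coset of $A_0(k)$ inside $A(\overline{K})$; in particular the difference of any two of its elements lies back in $A_0(k)$. Next I would argue by contradiction: suppose $\Omega\cap(A_0(k)+P)$ contains two distinct elements. Since every element of $\Omega$ is an integer multiple of $a$, these have the form $na$ and $ma$ with $n\neq m$. By the coset observation their difference $(n-m)a$ lies in $A_0(k)$, and $n-m\neq 0$, contradicting non-degeneracy. Therefore $\Omega\cap(A_0(k)+P)$ has at most one element, as claimed.

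The argument is genuinely short and self-contained (it uses none of the earlier results, only the group structure), so there is no serious technical obstacle; the only thing to be careful about is the bookkeeping across the tower $k\subset K\subset\overline{K}$. Specifically, $P$ is permitted to be a $\overline{K}$-point while the elements of $\Omega$ and $A_0(k)$ are $K$-points (respectively $k$-points), and one must observe that $A_0(k)+P$ is nonetheless a genuine coset of the subgroup $A_0(k)$ in the ambient group $A(\overline{K})$, so that the difference trick applies verbatim. For the downstream use in Corollary \ref{cor:constant1} I would also remark that ``at most one element'' trivially implies that $\Omega\cap(A_0(k)+P)$ is not dense in $A$ (as $\dim A>0$), which is precisely the hypothesis needed there; establishing that $\Omega$ \emph{itself} is dense in $A$ is a separate matter not addressed by this lemma.
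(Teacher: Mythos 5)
Your setup is correct as far as it goes: the coset observation, the difference trick, and the bookkeeping across $k\subset K\subset\overline{K}$ are all fine, and your identification of the group-theoretic statement actually needed --- that the image of $a$ in $A(K)/A_0(k)$ is non-torsion --- is exactly the right reduction. The gap is in how you discharge it. You conclude from $(n-m)a\in A_0(k)$ with $n-m\neq 0$ that non-degeneracy is contradicted, but under the definition in force (non-degenerate means $\mathbb{Z}\cdot a$ is dense in $A$) this is a non sequitur: non-degeneracy is blind to the constant/non-constant distinction, since $A_0(k)$ is itself dense in $A$, and a non-degenerate point can perfectly well be constant, in which case \emph{every} nonzero multiple of it lies in $A_0(k)$. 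So your claim that ``any reasonable notion of non-degeneracy must encode exactly this'' is false for the actual definition, and the implication you need is asserted rather than proved. Indeed, non-degeneracy plays no role in this lemma at all; it is used only downstream, to make $\Omega$ dense so that Corollary \ref{cor:constant1} has content.

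What does hold --- and is the paper's actual proof --- is that non-torsionness of the image of $a$ follows from the hypothesis $a\in A(K)\setminus A_0(k)$ alone. The missing step: since $k$ is algebraically closed of characteristic zero, the group $A_0(k)$ is divisible, so from $(n-m)a=b\in A_0(k)$ one may write $b=(n-m)c$ with $c\in A_0(k)$; then $a-c$ is an $(n-m)$-torsion point of $A(\overline{K})$, and all torsion of $A=(A_0)_K$ already lies in $A_0(k)$ (the torsion of $A_0$ is rational over $k=\overline{k}$ in characteristic zero), whence $a\in A_0(k)$, contradicting the hypothesis. Equivalently, $A(K)/A_0(k)$ is torsion-free because $A_0(k)$ is divisible and contains all torsion of $A(\overline{K})$. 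Your argument becomes complete and essentially identical to the paper's once ``contradicting non-degeneracy'' is replaced by this divisibility argument.
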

 \begin{proof} Suppose that there are integers $m>n$ with  $n a\in A_0(k) + P$ and $ma \in A_0(k) + P$. Then $(m-n)a$ is in $A_0(k)$. Since $A_0(k)$ is divisible, it follows that $a\in A_0(k)$ contradicting our assumption that $a\in A(K)\setminus A_0(k)$.  
 \end{proof}
 
A point $P$ on an abelian variety $A$ is \emph{non-degenerate} if the group $\mathbb{Z}\cdot P$ generated by $P$ is dense in $A$.   It is well-known that an abelian variety $A$ over a field of characteristic zero $k$ contains many non-degenerate points over the algebraic closure $\overline{k}$. In the case of function fields, this is not directly useful, as we will need the existence of non-degenerate points which do not come from the field of constants. This can be deduced from the work of Frey--Jarden \cite{FreyJarden},  but also  by adapting an argument of Mazur given in the proof of \cite[Theorem~3.1]{HassettTschinkel}.

 \begin{lemma}[Existence of non-degenerate points]\label{lemma:existence_nondegpoint}
 Let $k$ be a field of characteristic zero and let $K$ be a function field over $k$. If $A$ is an abelian variety over $k$, then there is a finite field extension $L/K$  and a non-degenerate point $P\in A(L)\setminus A(k)$. 
 \end{lemma}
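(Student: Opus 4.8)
The plan is to produce the required point by spreading out a single \emph{universal} non-constant point living on a generating curve, and then transporting it into a finite extension of $K$. First I would record two reductions. Since $\mathrm{trdeg}_k(K)\geq 1$, choose $t\in K$ transcendental over $k$, so $k(t)\subseteq K$. It suffices to construct a non-degenerate point $P\in A(L_0)\setminus A(k)$ for some finite extension $L_0/k(t)$: taking $L$ to be the compositum $L_0\cdot K$ inside a fixed $\overline{K}$ (legitimate since $\overline{k(t)}\subseteq \overline{K}$) gives a finite extension $L/K$ with $P\in A(L_0)\subseteq A(L)\setminus A(k)$, and non-degeneracy only strengthens under field extension (the Zariski closure of $\mathbb{Z}\cdot P$ can only grow). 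Moreover, any function field $F/k$ with $\mathrm{trdeg}_k(F)=1$ embeds $k$-linearly into a finite extension of $k(t)$: both $\overline{F}$ and $\overline{k(t)}$ are algebraically closed of transcendence degree one over $k$, hence $k$-isomorphic (Steinitz), and the image of the finitely generated field $F$ inside $\overline{k(t)}$ is algebraic, thus finite, over $k(t)$. So it is enough to find a non-degenerate, non-constant point in $A(F)$ for \emph{some} such $F$. (Note this detour is genuinely needed: $A(k(t))=A(k)$, as any morphism $\mathbb{P}^1\to A$ is constant, so $k(t)$ itself carries no non-constant point.)

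For the construction: because $k$ is infinite, fix an ample line bundle on $A$ over $k$ and intersect $\dim A-1$ general $k$-rational members of a sufficiently positive multiple to obtain a smooth, geometrically connected curve $C\subseteq A$ defined over $k$ (Bertini, with connectedness of positive-dimensional ample complete intersections). Set $F=k(C)$, a transcendence-degree-one function field in which $k$ is algebraically closed, and let $P\in A(F)$ be the image of the generic point under $C\hookrightarrow A$. Since $C\to A$ is non-constant, $P\notin A(\overline{k})\cap A(F)=A(k)$, so $P$ is non-constant. The key geometric input is that $C$ \emph{generates} $A$, i.e. $C_{\overline{k}}$ is not contained in any translate of a proper abelian subvariety; this follows from an intersection computation: if $q\colon A\to A'$ is a nonzero quotient and $q(C)$ were a point, then $C\cdot q^{*}H'=0$ for $H'$ ample on $A'$, contradicting the positivity of a product of ample classes with the nonzero nef class $q^{*}H'$.

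To verify that $P$ is non-degenerate I would use the criterion that $P$ fails to be non-degenerate exactly when some nonzero quotient $\psi\colon A_{\overline{F}}\to A'$ sends $P$ to a torsion point. Here the constancy of $A$ is essential: by rigidity of abelian varieties over algebraically closed fields of characteristic zero, every abelian subvariety of $A_{\overline{F}}=(A_{\overline{k}})_{\overline{F}}$ is already defined over $\overline{k}$, so the identity component of $\overline{\mathbb{Z}\cdot P}$ descends to an abelian subvariety $B_0\subseteq A_{\overline{k}}$ and one reduces to quotients $\psi$ defined over $\overline{k}$. For such $\psi$, the torsion of $A'$ is constant, so $\psi(P)$ torsion forces the composite $C_{\overline{k}}\to A'$ to be constant, i.e. $C_{\overline{k}}$ to lie in a coset of $\ker(\psi)^{0}\subsetneq A_{\overline{k}}$ --- contradicting that $C$ generates $A$. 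Hence $\psi(P)$ is non-torsion for every nonzero quotient, and $P$ is non-degenerate.

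The main obstacle is this non-degeneracy step, and within it the interplay between the ``large'' base field $\overline{F}$ and the constant structure of $A$: one must know that no new abelian subvarieties or quotients appear after base change to $\overline{F}$ (so that degeneracy of $P$ is detected by quotients defined over $\overline{k}$, where torsion is constant), together with the generation property of $C$. Both are standard --- the former is the rigidity of homomorphisms of abelian varieties over algebraically closed fields of characteristic zero (the same rigidity exploited in Lemma \ref{lemma:deformation_theory}), the latter the generation property of general complete-intersection curves --- but they are precisely the places where the constancy hypothesis on $A$ enters, and coupling them correctly is the technical heart of the argument.
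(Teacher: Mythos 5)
Your proof is correct, and it is precisely the route the paper intends: the paper gives no written proof of this lemma, instead citing Frey--Jarden or ``adapting an argument of Mazur'' from the proof of \cite[Theorem~3.1]{HassettTschinkel}, and your construction --- take a generating complete-intersection curve $C\subset A$ over $k$, observe that its generic point is non-degenerate (using Chow rigidity, so that degeneracy is detected by quotients defined over $\overline{k}$, where torsion is constant, contradicting that $C$ generates $A$), then embed $k(C)$ into a finite extension of $K$ --- is exactly that Mazur adaptation carried out in detail. All the supporting steps (Bertini, the intersection-theoretic generation argument, preservation of non-degeneracy and non-constancy under the compositum) are sound.
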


We are now ready to prove the potential weak Hilbert property for abelian varieties over a function field:

\begin{proof}[Proof of Theorem \ref{thm:ab_var_has_WHP}]
 We may replace $K$ by a finite field extension and write $A =\prod_i A_i$ with each $A_i$ a geometrically simple abelian variety, as the weak Hilbert property descends along finite \'etale morphisms \cite[Proposition~3.4]{CDJLZ}.
We may  also assume that $k$ is algebraically closed in $K$. Let $B$ be a smooth (geometrically connected) variety over $k$ with function field $K$, and let $K'$ be the function field of $B_{\overline{k}}$.  Replacing $K$ by a finite field extension if necessary, we may assume that,  for every $i$, we have that either the $\overline{K}/\overline{k}$-trace of $A_i$ is zero or that $A_i$ is constant.
 
 If $A_i$ has $\overline{K}/\overline{k}$-trace zero,  replacing $K$ by a finite field extension if necessary, we may  choose a non-degenerate point $a_i \in A_i(K)$. Define $\Omega_i \subset A_i(K) $ to be the subgroup generated by $a_i$. Note that $\Omega_i\subset A_i(K)\subset A_i(K')$ is not strongly thin in $A_{i,K'}$ by Theorem \ref{thm:nst_in_traceless_intro}. (Note that this conclusion is a priori stronger than being non strongly thin in $A_i$.)

If  $A_i$ is constant,  there is  an abelian variety $A_{i,0}$ over $k$ such that $A_i \cong (A_{i,0})_K$.   
By applying   Lemma \ref{lemma:existence_nondegpoint} to $(A_{i,0})_{\overline{k}}$, we see that  there is a finite field extension $L'/K'$ and a non-degenerate point $a_i\in A_{i,0}(L')\setminus A_{i,0}(\overline{k})$.   Since $K'/K$ is algebraic,  replacing $K$ by a finite field extension if necessary, we may assume that $a_i \in A_{i,0}(K) = A_i(K)$.     Let $\Omega_i  \subset A_i(K)$ be the subgroup generated by $a_i$.  Note that $\Omega_i$ is not strongly thin by Corollary \ref{cor:constant1} and Lemma \ref{lemma:nondeg_int}.

Define $\Sigma \subset A(K)$ to be $\prod_i \Sigma_i$.   By the product theorem (Theorem \ref{thm:product}),  the subset $\Sigma$ is not strongly thin in $A_{K'}$. In particular, it is not strongly thin in $A$. This shows that $A$ has the weak Hilbert property, as required.
\end{proof}

\bibliography{lafon}{}
\bibliographystyle{alpha}

\end{document}